\newcommand{\al}{\alpha}
\newcommand{\be}{\beta}
\newcommand{\ga}{\gamma}
\newcommand{\de}{\delta}
\newcommand{\la}{\lambda}
\newcommand{\auskommentieren}[1]{}
\newcommand{\beq}{\begin{equation}}
\newcommand{\eeq}{\end{equation}}
\DeclareMathOperator{\dive}{div}
\DeclareMathOperator{\curv}{curv}
\DeclareMathOperator{\dist}{dist}
\DeclareMathOperator{\scale}{scale}
\title{Numerical approximation of positive power curvature flow 
via deterministic games.}
\author{Heiko Kr\"oner\thanks{Fachbereich Mathematik, Universit\"at Hamburg, Bundesstra\ss e 55, 20146 Hamburg, Germany.
{\tt  heiko.kroener@uni-hamburg.de}}}
\begin{document}
\maketitle
\slugger{mms}{xxxx}{xx}{x}{x--x}%slugger should be set to mms, siap, sicomp, sicon, sidma, sima, simax, sinum, siopt, sisc, or sirev

\begin{abstract}
We approximate the level set solution for the motion of an embedded closed curve in the plane with normal speed $\max(0, \kappa)^{\ga}$ where $\kappa$ is the curvature of the curve  and $\frac{1}{3}<\ga<1$ by the value functions of a family of deterministic two person games. We show convergence of the value functions to the viscosity solution of the level set equation and propose a numerical scheme for the calculation of the value function. We illustrate the convergence properties of the scheme for different parameter values in some example cases.
\end{abstract}

% \PACS{PACS code1 \and PACS code2 \and more}
% \subclass{MSC code1 \and MSC code2 \and more}

\section{Introduction}
\label{intro}
In \cite{KS} R. Kohn and S. Serfaty present a game theoretic approach to the curve-shortening flow, i.e. the mean curvature flow  of  curves in the plane.
The (time-dependent) level set formulation of this flow is a degenerate 
parabolic, possibly singular equation and is interpreted as the limit of the value functions of a family of discrete-time, 
two-person games. It is proved that these value functions converge to the viscosity solution of the level set equation,
cf. \cite[Theorem 2]{KS}. Furthermore, using a
'minimal exit time game' a rate for the convergence of the value functions
of this game to the time-independent level set formulation of the (mean) curvature flow 
is proved in \cite{KS}. 
We note, that the corresponding convergence result (without rates) for the positive curvature 
flow (here the normal speed is the curvature if it is nonnegative and zero otherwise) is given in \cite[Theorem~5]{KS}. In contrast to the curve shortening flow not much is known about the behavior of a non convex initial curve in the plane moving by positive curvature flow. But the latter has been used for image processing, see the remarks in \cite[page 16]{KS} and \cite{MalladiSethian1996}.
In form of examples modified flows are presented in \cite[Section 1.7]{KS} which have a game theoretic interpretation. But proofs of
convergence for the value functions of the corresponding games for these modified examples are not given therein.
Among these is a game, cf. \cite[Example 3, Section 1.7]{KS}, which can handle the case 
of the curvature flow of a convex curve in the plane at which the normal speed is given -- instead of the curvature -- 
by a power $\ga>0$ of the (mean) curvature. We call this power curvature flow (PCF) (and the analogous flow 
for hypersurfaces power mean curvature flow (PMCF)). PCF is only a well-defined flow when the initial curve is convex. 
In \cite{KS} five open questions are formulated, cf. \cite[Section 1.8]{KS}; among these is the
question of numerical advantages or disadvantages of this approach.

In this paper we consider for $\frac{1}{3}<\ga<1$ the more general, so-called positive power curvature flow PPCF for which the normal speed is given by $\kappa_+^{\ga}$ (where $\kappa_+=\max(0, \kappa)$) and which coincides with PCF if the initial curve is convex.  Our aim is to approximate this flow by a family of value functions of suitable games. We show convergence of the value functions to the unique viscosity solution of the time-dependent level set formulation of  PPCF 
and present a numerical algorithm for this approach.
We demonstrate the convergence of the algorithm for some example cases and analyze the effect of different
choices for the numerical parameters.

Let us relate the content of our paper to \cite{KS}. There is no game in \cite{KS} for PPCF. The game in \cite[Example 3, Section 1.7]{KS} is suggested therein to approximate PCF in case $0<\ga<1$. The corresponding limit equation \cite[(1.23)]{KS} has the (formal) disadvantage  that its left-hand side becomes $-\infty$ if there are non convex zero-level sets (in case they are curves) of the continuum solution (e.g. at the initial time outside of the initial curve).  By considering PPCF we avoid this convexity issue, especially we can use non convex initial curves. The game we use for the approximation of PPCF is essentially the one in  \cite[Example 3, Section 1.7]{KS} but  it turns out that it is necessary (at least for our proof) to use a slightly different definition of the value function. The difference is that we
take the minimum in the definition of the value function over a ($\epsilon$-dependent) compact subinterval
of $(0, \infty)$ instead over $(0, \infty)$, cf. (\ref{40}). This difference is not due to the fact that we consider PPCF instead of PCF but is of general nature. The initial and endpoints of the subintervals $I_{\epsilon}$ have to satisfy some technical relations, cf. (\ref{86})-(\ref{401}), which imply that we need the restriction $\frac{1}{3}<\ga<1$. The interesting point is that $\frac{1}{3}$ is exactly the critical value at which the convergence behavior of  PCF changes, see Theorem \ref{c1} and the following remarks. 

Concerning a numerical application of the Kohn-Serfaty approach to 
the curve shortening flow (i.e. $\gamma=1$)
we refer to \cite{CarliniFalconeFerretti2010}.
In addition therein a semi-Lagrangian approximation of the curve
shortening flow is considered.
A semi-Lagrangian approach for the affine curvature flow case which corresponds 
to $\ga=\frac{1}{3}$ is content of 
\cite{CarliniFerretti2013}. 

We give some references for the numerical
approximation of flows by powers of the curvature (which do not 
use the Kohn-Serfaty approach).
A numerical approximation of the stationary level set solution of 
PCF in case $\ga \ge 1$  
is content of our two previous
papers \cite{K} and \cite{KKK}. There we approximate the regularized stationary level set equation 
(used in \cite{S} to prove existence of the stationary level set PMCF) 
by finite elements and prove rates for the 
approximation errors. 
The stationary level set equation of mean curvature flow
is also content of \cite{Carlini_Ferretti_2008}.

There are only few numerical papers which deal with PCF for $\gamma\neq 1$.

In \cite{BGN} and \cite{BGN2} stability bounds for a finite element approximation
of PCF for curves is derived, see also \cite{BGN3}. In \cite{MSe} the 
parametric formulation of the evolution
of plane curves driven by a nonlinear function of curvature and anisotropy
is considered.
Concerning PCF from the perspective of image processing we refer to 
\cite{AGLM} \cite{AM}, \cite{ST}, \cite{MS} and \cite{MalladiSethian1996}.

For estimates of the regularization error for the instationary level set 
formulation of mean curvature flow we refer to \cite{M} and \cite{D}.
For more general references to the numerical treatment of motion by 
curvature we refer to 
 \cite{M},  \cite{DDE}, \cite{D}, \cite{NV}, \cite{DD}
 , \cite{Deckelnick_Dziuk_2001}, \cite{OS} and \cite{Walkington} 
 where the instationary level set formulation is considered.

Our paper is organized as follows. In the remaining part of the present section we formulate
some known results about the behavior of curves evolving under PCF. In Section \ref{sec:1} we define
the value functions $u^{\epsilon}$. In Section \ref{sec:2} we show convergence of the value functions to the 
solution of the continuous problem. In Section \ref{sec:3} 
we present the algorithm in order to calculate $u^{\epsilon}$. Its application in some example cases as well as the effect of the choice
of different numerical parameters
is content of Section \ref{sec:4}.

As for the positive curvature flow little is known about the behavior of PPCF for not convex initial curves.
We state some well-known facts about the behavior of curves evolving under PCF in case $0<\ga \le 1$, see 
\cite{A03} and \cite{Andrews}, and remark that in these references also the case $\ga >1$ is treated.
\begin{theorem} \label{82}
 Let $\ga=1$ and $\Gamma_0$ a smooth, embedded closed curve given by an embedding
 $x_0 : S^1\rightarrow \mathbb{R}^2$. Then there exists a unique smooth solution $x: S^1 \times [0, T) \rightarrow
 \mathbb{R}^2$ of
 \beq \label{83}
 \frac{d}{dt}x = - \kappa^{\ga} \nu \quad (= - \kappa \nu)
 \eeq
 where $\nu$ denotes the outer unit normal, with initial data $x_0$. $\Gamma_t=x(S^1, t)$ converges
 to a point $p \in \mathbb{R}^2$ as $t \rightarrow T$. The rescaled curves $(\Gamma_t-p)/\sqrt{2(T-t)}$
 converge to the unit circle about the origin as $t \rightarrow T$.
\end{theorem}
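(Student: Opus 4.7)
The plan is to treat this classical Gage--Hamilton--Grayson result in three stages: short-time existence and uniqueness, collapse to a point at a finite maximal time $T$, and smooth convergence of the rescaled flow to the unit circle.

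First I would localize by writing embeddings near $x_0$ as graphs over $\Gamma_0$ in a tubular neighborhood, so that (\ref{83}) becomes a quasilinear uniformly parabolic scalar equation on $S^1$ for the (signed) graph function. Classical Schauder theory for quasilinear parabolic equations then yields a smooth solution on some $[0, t_0)$, and uniqueness follows from the parabolic maximum principle applied to the difference of two such graph solutions. Extending by continuation to a maximal interval $[0, T)$, the Gauss--Bonnet identity $\frac{d}{dt}A(t) = -\int_{\Gamma_t}\kappa\,ds = -2\pi$ gives $A(t) = A(0) - 2\pi t$, hence $T \le A(0)/(2\pi) < \infty$.

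For the second stage I would use the avoidance principle: two initially disjoint smooth solutions of (\ref{83}) remain disjoint, which follows from the maximum principle applied to the distance function between them. Sandwiching $\Gamma_t$ between enclosing and enclosed shrinking circles shows that $\Gamma_t$ remains in a fixed bounded region and that its diameter tends to $0$ as $t\to T$, so there is a unique point $p\in\mathbb{R}^2$ with $\Gamma_t\to\{p\}$ in Hausdorff distance.

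The hardest step is the Grayson assertion that the embedded curve becomes convex before time $T$. The standard approach (see \cite{A03, Andrews}) monitors the chord/arc-length ratio between pairs of points on $\Gamma_t$, using the parabolic maximum principle on this two-point function to rule out blow-up singularities other than the shrinking round circle. Once convexity is established on some interval $[t_1, T)$, the Gage--Hamilton machinery applies: the isoperimetric deficit $L^2/(4\pi A) - 1$ decays, the curvature becomes pinched, and a suitable parabolic rescaling of the evolution equation $\partial_t \kappa = \kappa_{ss} + \kappa^3$ shows that $(\Gamma_t - p)/\sqrt{2(T-t)}$ converges smoothly to the unit circle. The main obstacle is this Grayson step, since the other ingredients are either routine parabolic PDE or follow naturally from Gage--Hamilton once convexity is in hand.
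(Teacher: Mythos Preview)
The paper does not prove this theorem at all. It is stated in the introduction as a ``well-known fact'' about curve shortening flow, with attribution to \cite{A03} and \cite{Andrews} (and implicitly Gage--Hamilton and Grayson); the paper's own work concerns only the convergence of the game-theoretic value functions $u^\epsilon$ for PPCF, not the classical PDE theory of curve shortening. There is therefore nothing to compare your proposal against.

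That said, your outline is a faithful sketch of the standard Gage--Hamilton--Grayson argument and would constitute a correct proof plan if one were required. One small remark: the two-point chord/arc estimate you mention is Huisken's streamlined later proof of the Grayson step, not the one in \cite{A03} or \cite{Andrews}, but either route is valid and your identification of this step as the main obstacle is accurate.
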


The assumption of smoothness of the initial curve in Theorem \ref{82} can be relaxed to allow $\Gamma_0$ to be the 
boundary of a bounded open convex region and the curves $\Gamma_t$ are converging to $\Gamma_0$ in Hausdorff 
distance as $t \rightarrow 0$, see \cite{A1996}.

If $\ga = \frac{1}{3}$ and $\Gamma_0$ a smooth, convex, embedded closed curve then Theorem \ref{82} still
holds with a different rescaling, namely $(\Gamma_t-p)/(4(T-t)/3)^{\frac{3}{4}}$ converges to an ellipse of enclosed area
$\pi$ centered at the origin. The regularity assumption  on the initial curve can be weakened to
allow boundaries of open bounded convex regions.
For $\ga \in [\frac{1}{3}, 1]$ we get the following result.
\begin{theorem} \label{c1}
If $\ga \in [\frac{1}{3}, 1]$ and $\Gamma_0$ the boundary curve of an open bounded convex set in 
$\mathbb{R}^2$
then (\ref{83}) has a smooth and strictly convex solution for $0<t<T$ which converges to a point 
$p \in \mathbb{R}^2$ and the rescaled curves 
$(\Gamma_t-p)/(T-t)^{\frac{1}{1+\ga}}$ converge smoothly to a limit curve $\Gamma_{\ga}$ which
satisfies $\kappa^{\ga} = \la \left<x, \nu\right>$ for some $\la>0$.
\end{theorem}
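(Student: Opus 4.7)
The plan is to follow Andrews' program for contraction of convex curves by powers of curvature, proceeding in three stages: short-time existence, a priori estimates, and analysis of the rescaled flow near extinction. For smooth strictly convex $\Gamma_0$, I would parametrize each $\Gamma_t$ by the angle $\theta$ of the outer unit normal and rewrite (\ref{83}) as the scalar equation $h_t = -(h_{\theta\theta}+h)^{-\ga}$ for the support function $h(\theta,t)$, using $\kappa = (h_{\theta\theta}+h)^{-1}$. This equation is strictly parabolic on strictly convex data, and classical theory yields short-time existence, smoothness, and a comparison principle. To relax the smoothness of $\Gamma_0$ to the boundary of an open bounded convex set, I would approximate from inside by smooth strictly convex curves $\Gamma_0^\ep$, run the flow for each, and pass to the limit using comparison, obtaining Hausdorff convergence to the initial data as $t\to 0$.

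The decisive step is to show that convexity is preserved and that $\kappa$ remains two-sidedly bounded on every compact subinterval of $(0,T)$. In the angle parametrization one has $\pa_t\kappa = \kappa^{2}(\kappa^{\ga})_{\theta\theta} + \kappa^{2+\ga}$; comparison with a shrinking self-similar solution of circle type forces $T<\infty$ and bounds the diameter from above, while Tso's trick applied to a quantity of the form $\kappa^{\ga}/(h-c)$ for an appropriate $c$ gives a uniform interior upper bound on $\kappa$. The lower bound on $\kappa$, which prevents the formation of flat pieces and keeps the evolution uniformly parabolic, follows from the integral pinching estimates of \cite{A03} controlling the ratio $\kappa_{\max}/\kappa_{\min}$. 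Combined with Krylov--Safonov type estimates for the nonlinear equation satisfied by $h$, these bounds yield full smoothness on $S^{1}\times(0,T)$ and, via pinching, force both the circumradius and inradius of $\Gamma_t$ to contract to zero at the same rate, so that $\Gamma_t$ shrinks to a single point $p \in \mathbb{R}^{2}$.

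To identify the rescaled limit I would introduce $\tilde{\Gamma}_\tau=(\Gamma_t-p)/(T-t)^{1/(1+\ga)}$ and the new time variable $\tau=-\log(T-t)$, under which the evolution becomes autonomous and whose stationary points are exactly the curves satisfying $\kappa^{\ga}=\la\left<x,\nu\right>$. Andrews' entropy, a scale-invariant integral along the rescaled flow which is non-increasing and constant only on self-similar solutions, gives compactness and subsequential smooth convergence of $\tilde{\Gamma}_\tau$ to such a shrinker, and the classification of homothetically shrinking solutions in the range $\ga\in[\tfrac{1}{3},1]$ from \cite{A03} and \cite{Andrews} promotes this to genuine convergence to a unique limit curve $\Gamma_{\ga}$. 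The main obstacle is precisely this last step: deriving uniform higher-order estimates in the rescaled picture and the rigidity argument identifying the limit with $\Gamma_{\ga}$; it is here that the threshold $\ga\geq\tfrac{1}{3}$ becomes essential, since below the affine value the entropy loses its coercivity and further non-homothetic limiting behaviors become possible.
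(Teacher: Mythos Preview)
The paper does not give its own proof of this theorem: it is stated in the introduction as a known fact about power curvature flow, with the references \cite{A03} and \cite{Andrews} supplied. So there is no in-paper argument to compare your proposal against.

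Your outline is in the spirit of Andrews' program in those references: support-function parametrization, short-time existence and comparison, Tso-type upper curvature bounds, pinching/entropy estimates to control $\kappa_{\max}/\kappa_{\min}$, contraction to a point, and analysis of the rescaled flow via a monotone scale-invariant functional whose stationary points are the homothetic solutions $\kappa^{\ga}=\la\left<x,\nu\right>$. As a high-level sketch this is the right narrative. If anything, be careful that the actual lower bound on $\kappa$ and the compactness of the rescaled flow in \cite{Andrews,A03} come from specific monotone integral quantities rather than a generic ``pinching'' statement, and that upgrading subsequential convergence to full convergence uses the classification of shrinkers together with the monotonicity, not merely the classification alone. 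But since the paper only quotes the result, your proposal is not at odds with anything in it.
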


Curves $\Gamma_{\la}$ are called homothetic solutions and in case $\ga>\frac{1}{3}$ all homothetic solutions
are circles. For $\ga=\frac{1}{3}$ the only homothetic solutions are ellipses. For $0<\ga<\frac{1}{3}$ a classification
of homothetic solutions is given in \cite[Theorem 1.5]{A03}.

There are results for non-convex initial curves in \cite{AST} and \cite[Chapter 8]{CZ2000}. 

For $0<\ga<\frac{1}{3}$ the isoperimetrical ratio becomes unbounded
near the final time for generic symmetric initial data, cf. the remarks in \cite[Section 1]{A03}.

There are results on PMCF in case of $n$-dimensional hypersurfaces in Euclidean space, 
$n \ge 2$, see for example \cite{S2} and \cite{SS} for the parametric formulation and \cite{S} for the time-independent 
level set formulation in case $\ga \ge 1$ and \cite{M} (and references therein) for the time-dependent level
set formulation of PCF in case $0<\ga \le 1$. 

Concerning a time-dependent level set equation for PCF we only know the equation
\begin{equation} \label{81}
\begin{aligned} 
u_t =& |Du| \left\{\dive  \left(\frac{Du}{|Du|}\right)\right\}^{\ga} \\
=& \left\{\sum_{i,j}\left(\de_{ij}-\frac{D_iuD_ju}{|Du|^2}\right)D_iD_j u\right\}^{\ga}|Du|^{1-\ga}
\end{aligned}
\end{equation}
in case $\ga=\frac{1}{2n-1}$, $n \in \mathbb{N}$, which is a little bit different from PCF since here negative curvatures are allowed.  
For PPCF the time-dependent level set equation is given by
\begin{equation} \label{81_}
\begin{aligned} 
u_t =& |Du| \left\{\dive  \left(\frac{Du}{|Du|}\right)\right\}_+^{\ga} \\
=& \left\{\sum_{i,j}\left(\de_{ij}-\frac{D_iuD_ju}{|Du|^2}\right)D_iD_j u\right\}_+^{\ga}|Du|^{1-\ga}
\end{aligned}
\end{equation}
for general $0<\ga\le 1$. We consider these equations in  $\mathbb{R}^2\times (0, \infty)$
with initial condition $u(\cdot, 0)=u_0$. We assume that $u_0 \in C^{0,1}(\mathbb{R}^2)$, $|u_0|\le 1$ and 
$u_0(x)=1$ for $x \in \mathbb{R}^2\backslash B_R(0)$ where $R>0$ is a sufficiently large radius.

Equations (\ref{81}), (\ref{81_}) are fundamental equations in image processing, 
especially in the case $\ga=\frac{1}{3}$ where these equations are the so-called 
affine curvature equations, cf. \cite{AGLM} and \cite {Giga}. 
Equations (\ref{81}), (\ref{81_}) are included by the general 
geometric equations which are dealt in \cite{CGG1}, 
\cite{GG1992} and  \cite{Giga} and therefore for each equation 
exists a unique viscosity solution.
We mention \cite{M} where convergence with rates of a regularization of (\ref{81}) is proved. 

\section{Game theoretic interpretation and definition of $u^{\epsilon}$}
\label{sec:1}
Let $\Omega$ be a smooth, bounded domain in the plane and $T>0$.
Let $u_0$ be a Lipschitz continuous function in the plane with $u_0<0$ 
in $\Omega$, $u_0\ge 0$ in $\mathbb{R}^2\backslash \bar \Omega$ and $|u_0|\le 1$. 
We assume $\frac{1}{3}<\ga<1$ and consider the following continuum equation
\beq \label{3}
\begin{cases}
u_t - |\nabla u|\varphi(\curv(u)) &=0 \\ 
u(\cdot, T) &= u_0
\end{cases}
\eeq
where 
\beq \label{13}
\curv(u) = - \dive(\nabla u/|\nabla u|)
\eeq
 is the curvature of the level set and
 \begin{equation} \label{12}
 \begin{aligned}
  \varphi(\kappa) = \begin{cases}
                     -|\kappa|^{\ga}, \quad &\kappa \le 0 \\
                     0, \quad &\kappa>0.
                     \end{cases} 
 \end{aligned}
\end{equation}
We note, that in this sense
the curvature of the unit circle is $-1$ because in $\{x\in \mathbb{R}^{n+1}: |x|-1=0\}$ we have $\curv(|x|)=-\dive \left(\frac{x}{|x|}\right)=
 -n<0$ in view of (\ref{13}). Furthermore, (\ref{3}) is not PPCF but
 after transforming the time variable $t \mapsto T-t$ we get the correct equation, i.e. PPCF (\ref{81_}). 
 As already explained above our equation (\ref{3}) differs from the continuum equation in \cite[Section 1.7, Example 3]{KS} in the fact that the latter requires $u_t=\infty$ if $\curv(u)>0$.

The goal is to approximate (\ref{3})
by the value functions of certain  deterministic, time-discrete two person games.  
For the convergence proof it will be crucial that
for $\kappa\le 0$ we can write
\beq \label{84}
\varphi(\kappa) = \sup_{s > 0}\left(\frac{1}{2}\kappa s^2-f(s)\right)
\eeq
where
\beq
f(s) = c_{\ga}s^{\frac{2\ga}{\ga-1}}, \quad
c_{\ga} = (1-\ga) (2 \ga)^{\frac{\ga}{1-\ga}}.
\eeq
The value functions $u^{\epsilon}$, $\epsilon >0$, are defined for $x \in \Omega$ and $0\le t \le T$ by
 \beq \label{40}
 u^{\epsilon}(x,t) = \min_{\|v\|=1,s \in I_{\epsilon}} \max_{b, \beta\in\{-1,1\}}
 u^{\epsilon}(x+b \epsilon s v + \beta \epsilon^2 f(s)v^{\perp}, t + \epsilon ^2)
 \eeq
 where 
 \beq \label{86}
 I_{\epsilon} = [\epsilon^{\al_1}, \epsilon^{-\al_2}]
 \eeq 
 with 
 \beq \label{601}
 \frac{1-\ga}{2\ga}<\al_1< \min \left(1, \frac{1-\ga}{\ga}\right) 
 \eeq
and
 \beq \label{401}
 0<\al_2 < \min\left( \al_1\frac{2\ga}{1-\ga}-1, \frac{1}{3}\right). 
 \eeq
 The difference from our definition to the definition in \cite[Example 3, Section 1.7]{KS} is that we minimize over $s \in I_{\epsilon}$ instead of $s>0$.
The first inequality in (\ref{601}) implies that the RHS of (\ref{401}) is positive so that 
there exists a corresponding $\al_2$. The $\al_1<1$ part of the second inequality in (\ref{601}) is needed for Case (1) in part (ii) 
of the proof of Lemma \ref{d1}. The other part of the second inequality in (\ref{601}) implies 
that the step size is at most a fixed positive power of $\epsilon$. The first part of the second 
inequality in (\ref{401}) is needed for part (i) of the proof of Lemma \ref{d1} while the second 
part is used for Case (2ba) of part (ii) of the proof of Lemma \ref{d1}. Inequality  (\ref{601}) 
implies $\ga >\frac{1}{3}$ which is the reason why we have only a convergence result for this case.

We can calculate $u^{\epsilon}$ numerically by setting $u^{\epsilon}(\cdot, T)=u_0$ and 'playing the game backwards in time', i.e. we calculate $u^{\epsilon}(\cdot, T-\epsilon^2)$, etc.. 

\section{Convergence of the value functions}
\label{sec:2}
In this section we show that the value functions converge to a solution of (\ref{3}).
For completeness we give the following definition.

\begin{definition}
 (i) A lower semicontinuous function $u$ is a viscosity supersolution of (\ref{3}) if whenever
 $\phi(x,t)$ is smooth and $u - \phi$ has a local minimum at $(x_0, t_0)$ we have
 \beq
 \phi_t -|\nabla \phi|\varphi(\curv(\phi)) \le 0
 \eeq
 at $(x_0, t_0)$ if $\nabla \phi(x_0, t_0) \neq 0$, and 
 \begin{equation}
 \begin{aligned}
 \phi_t \le 0 
  \end{aligned}
 \end{equation}
 at $(x_0, t_0)$ if $\nabla\phi(x_0, t_0)=0$.
 
 (ii) An upper semicontinuous function $u$ is a viscosity subsolution of (\ref{3}) if whenever
 $\phi(x,t)$ is smooth and $u - \phi$ has a local maximum at $(x_0, t_0)$ we have
 \beq
 \phi_t -|\nabla \phi|\varphi(\curv(\phi)) \ge 0
 \eeq
 at $(x_0, t_0)$ if $\nabla \phi(x_0, t_0) \neq	0$, and 
 \begin{equation}
 \begin{aligned}
 \phi_t \ge 0 
  \end{aligned}
 \end{equation}
 at $(x_0, t_0)$ if $\nabla\phi(x_0, t_0)=0$.

\end{definition}
We define 
\begin{equation}
 \begin{aligned}
  \bar u(X) =& \limsup_{Y\rightarrow X, \epsilon \rightarrow 0}u^{\epsilon}(Y) \\
  \underline u(X) =& \liminf_{Y \rightarrow X, \epsilon\rightarrow 0}u^{\epsilon}(Y).
 \end{aligned}
\end{equation}
$\bar u$ is upper semicontinuous, $\underline u$ is lower semicontinuous and 
$\underline u \le \bar u$
We will show that $\bar u$ is a viscosity subsolution of (\ref{3}) and that $\underline u$ is a viscosity 
supersolution of (\ref{3}). Then from a comparison principle we get that $\bar u=\underline u$ is a solution and that $u^{\epsilon}\rightarrow u$. The proof strategy starts with an argumentation adapted from 
\cite[Section 4.3]{KS} and then we distinguish several cases to handle our flow. 
\begin{lemma}
 $\bar u$ is a viscosity subsolution of (\ref{3}).
\end{lemma}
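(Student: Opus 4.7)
The plan is to apply the Barles--Souganidis relaxed-limit framework adapted to the min-max dynamic programming principle (\ref{40}). Pick a smooth test function $\phi$ such that $\bar u - \phi$ has a strict local maximum at $(x_0, t_0)$ (strictness is harmless by the usual quartic perturbation). The definition of $\bar u$ as a relaxed upper limit provides sequences $\epsilon_k \to 0$ and $(x_k, t_k) \to (x_0, t_0)$ with $u^{\epsilon_k} - \phi$ attaining a local maximum at $(x_k, t_k)$ and $u^{\epsilon_k}(x_k, t_k) \to \bar u(x_0, t_0)$. Substituting $u^{\epsilon_k}(y, \tau) \le \phi(y, \tau) + (u^{\epsilon_k} - \phi)(x_k, t_k)$ into (\ref{40}) at $(x_k, t_k)$ yields
\[ 0 \le \min_{\|v\|=1,\, s \in I_{\epsilon_k}}\, \max_{b,\beta\in\{-1,1\}} \bigl[\phi(x_k + b\epsilon_k s v + \beta \epsilon_k^2 f(s) v^\perp,\, t_k + \epsilon_k^2) - \phi(x_k, t_k)\bigr]. \]
A Taylor expansion of order two in space and one in time, followed by maximization over $b,\beta \in \{-1,1\}$, converts the bracket into
\[ \epsilon_k^2 \phi_t + \epsilon_k s |v\cdot\nabla\phi| + \epsilon_k^2 f(s)|v^\perp\cdot\nabla\phi| + \tfrac12 \epsilon_k^2 s^2\, v^T D^2\phi\, v + R_k(v,s), \]
with $R_k$ collecting the cubic spatial error, mixed space-time terms, and the sign-independent pieces of $|a|^3$ for $a = b\epsilon_k s v + \beta\epsilon_k^2 f(s) v^\perp$.

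Consider first $\nabla\phi(x_0, t_0) \neq 0$. The min-player kills the $O(\epsilon_k)$ term by choosing $v$ perpendicular to $\nabla\phi(x_k, t_k)$; in the plane this makes $|v^\perp\cdot\nabla\phi| = |\nabla\phi|$ and $v^T D^2\phi\, v = -|\nabla\phi|\,\curv(\phi)$. Dividing by $\epsilon_k^2$ and letting $k \to \infty$ with any fixed $s > 0$ (which lies in $I_{\epsilon_k}$ eventually since $\epsilon_k^{\alpha_1} \to 0$ and $\epsilon_k^{-\alpha_2} \to \infty$) yields
\[ \phi_t + |\nabla\phi| \bigl(f(s) - \tfrac12 s^2 \kappa_0\bigr) \ge 0 \quad\text{for every } s > 0, \]
with all derivatives at $(x_0, t_0)$ and $\kappa_0 := \curv(\phi)(x_0, t_0)$. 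If $\kappa_0 \le 0$, the Legendre-type representation (\ref{84}) gives $\sup_{s>0}(\tfrac12 \kappa_0 s^2 - f(s)) = \varphi(\kappa_0)$, attained at a unique $s^* > 0$; taking the supremum over $s$ in the displayed inequality produces the desired subsolution inequality $\phi_t - |\nabla\phi|\,\varphi(\curv(\phi)) \ge 0$. If $\kappa_0 > 0$, letting $s \to \infty$ sends the displayed left-hand side to $-\infty$, forcing $\phi_t = +\infty$; so this sub-case cannot occur and the subsolution condition holds vacuously there.

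The degenerate case $\nabla\phi(x_0, t_0) = 0$ is handled by the standard quartic perturbation $\phi_\delta := \phi + \delta|x-x_0|^4$, which preserves the strict local-max property and makes the gradient nonzero on a punctured neighborhood of $x_0$; applying the non-degenerate argument at the shifted maxima of $\bar u - \phi_\delta$ and then letting $\delta \to 0$ recovers $\phi_t(x_0, t_0) \ge 0$.

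The main obstacle will be the uniform control of the remainder $R_k(v, s)$ across the whole capped interval $I_{\epsilon_k} = [\epsilon_k^{\alpha_1}, \epsilon_k^{-\alpha_2}]$: one must dominate the contributions $\epsilon_k^3 s^3$, $\epsilon_k^4 f(s)^2$, and the cross term $\epsilon_k^3 s f(s)$ by $o(\epsilon_k^2)$ simultaneously for every admissible $s$. This is precisely where the exponent inequalities (\ref{86})-(\ref{401}) on $\alpha_1, \alpha_2$ become essential and, through the first inequality in (\ref{601}), force the restriction $\gamma > \tfrac13$. A secondary technicality is that the min-player's $v$ can only be chosen approximately perpendicular to $\nabla\phi(x_k, t_k)$ rather than exactly perpendicular to $\nabla\phi(x_0, t_0)$, but the resulting perturbation of the leading-order terms is routine.
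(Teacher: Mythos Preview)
Your overall strategy differs from the paper's: you run a one-step Barles--Souganidis consistency argument (locate approximate maxima of $u^{\epsilon_k}-\phi$, insert the dynamic programming relation (\ref{40}), Taylor expand, pass to the limit), whereas the paper argues by contradiction via a game trajectory $(X_k^\epsilon)$ along which $u^\epsilon$ is nondecreasing while $\phi$ drops by at least $\epsilon^2\theta_0/2$ at each step.

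For the non-degenerate case $\nabla\phi(x_0,t_0)\neq 0$ your route works and is in fact shorter: since you \emph{fix} $s>0$ first and only then send $\epsilon_k\to 0$, the Taylor remainder is $O(\epsilon_k^3)$ for that particular $s$ and no uniform control over all of $I_{\epsilon_k}$ is required. Consequently your ``main obstacle'' paragraph is misdirected: the exponent inequalities (\ref{601})--(\ref{401}), and with them the restriction $\gamma>\tfrac13$, play no role in the subsolution lemma. As the paper itself notes, those constraints are needed only in the \emph{supersolution} proof (Lemma~\ref{d1}), where one cannot choose $s_k$ but must cope with whatever $s_k\in I_\epsilon$ the minimizer selects.

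The genuine gap is the degenerate case $\nabla\phi(x_0,t_0)=0$. The quartic perturbation $\phi_\delta=\phi+\delta|x-x_0|^4$ does \emph{not} shift the maximum away from $x_0$: since $\phi_\delta\ge\phi$ with equality precisely at $x_0$, the point $(x_0,t_0)$ remains the strict local maximum of $\bar u-\phi_\delta$, and one still has $\nabla\phi_\delta(x_0,t_0)=0$. There are thus no ``shifted maxima'' at which to invoke the non-degenerate argument. The difficulty here is real: along $(x_k,t_k)\to(x_0,t_0)$ the quantity $|\nabla\phi(x_k,t_k)|$ tends to $0$ at an uncontrolled rate, while $f(s)\to\infty$ as $s\downarrow 0$, so the product $f(s)\,|\nabla\phi(x_k,t_k)|$ is an indeterminate $\infty\cdot 0$; likewise, for a fixed unit vector $v$ the first-order term $\epsilon_k s\,|v\cdot\nabla\phi(x_k,t_k)|$ need not be $o(\epsilon_k^2)$. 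The paper handles this by an explicit case analysis (part (ii) of its proof): it takes $v_k$ with $\langle v_k^{\perp},\nabla\phi(X_k^\epsilon)\rangle=0$ and then selects $s_k\in I_\epsilon$ adaptively according to the size of $\lambda_k^\epsilon=|\langle\nabla\phi(X_k^\epsilon),v_k\rangle|$ relative to suitable powers of $\epsilon$ and to the Hessian term, distinguishing several subcases. Your sketch needs an argument of comparable care at this point.
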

\begin{proof}
 We argue by contradiction. If not, then there is a smooth $\phi$ such that $(x_0, t_0)$ is
 a local maximum of $\bar u - \phi$ with
 \beq \label{20}
 \phi_t -|\nabla \phi|\varphi(\curv(\phi)) \le \theta_0 < 0 \text{ at }(x_0, t_0)
 \eeq
 if $\nabla \phi(x_0, t_0)\neq 0$ or we have $\nabla \phi(x_0, t_0)=0$ and 
 \beq \label{87}
 \phi_t \le \theta_0 <0 
 \eeq
 in $(x_0, t_0)$.
Adding to $\phi$ a nonnegative function whose derivatives at $(x_0, t_0)$ are all zero up
to second order, we can assume that $(x_0, t_0)$ is a strict local maximum of $\bar u-\phi$
in a $\de$-neighborhood of $(x_0, t_0)$. Let $(x^0_{\epsilon}, t^0_{\epsilon})\rightarrow 
(x_0, t_0)$ such that $u^{\epsilon}(x^0_{\epsilon}, t^0_{\epsilon})\rightarrow \bar u(x_0, t_0)$.

(i) Let us consider the case $\nabla \phi(x_0, t_0)\neq 0$.
We may assume that $\nabla \phi \neq 0$ in the above $\de$-neighborhood. We construct the following 
sequence
\begin{equation} \label{25}
\begin{aligned}
X^{\epsilon}_0 =& (x^0_{\epsilon}, t^0_{\epsilon}) \\
X^{\epsilon}_1 =& \left(x^0_{\epsilon}+b_0 \epsilon s_0 \frac{\nabla^{\perp} \phi(X^{\epsilon}_0)}{
|\nabla\phi(X^{\epsilon}_0)|}+\beta_0 \epsilon^2f(s_0)\frac{
\nabla\phi(X^{\epsilon}_0)}{|\nabla\phi(X^{\epsilon}_0)|}, t^0_{\epsilon}+{\epsilon}^2\right) \\
X^{\epsilon}_{k+1} =& X^{\epsilon}_k+\left(b_{k} \epsilon s_{k} \frac{\nabla^{\perp} \phi(X^{\epsilon}_k)}{
|\nabla\phi(X^{\epsilon}_k)|}+\beta_{k} \epsilon^2f(s_{k})\frac{
\nabla\phi(X^{\epsilon}_k)}{|\nabla\phi(X^{\epsilon}_k)|}, {\epsilon}^2\right)
\end{aligned}
\end{equation}
where the $b_k$ and $\beta_k$ are maximizing 
$u^{\epsilon}(X^{\epsilon}_{k+1})$
for given $s_k$ 
and $s_k\in I_{\epsilon}$  is chosen as will become clear in the following.
Then $u^{\epsilon}(X^{\epsilon}_k)\le 
u^{\epsilon}(X^{\epsilon}_{k+1})$ in view of (\ref{40}) and hence
\beq\label{21}
u^{\epsilon}(X^{\epsilon}_0) \le u^{\epsilon}(X^{\epsilon}_k).
\eeq
We let $X(s)$ be the continuous path that affinely interpolates between these points i.e. $X(t)
= X^{\epsilon}_k + (\frac{t-k\epsilon^2-t^0_{\epsilon}}{\epsilon^2})(X^{\epsilon}_{k+1}-X^{\epsilon}_k)$ for $t^0_{\epsilon}+k \epsilon^2
\le t \le t^0_{\epsilon}+(k+1)\epsilon^2$, and write $(x(t),t)=X(t)$.
We have
\begin{equation}
\begin{aligned}
\frac{\partial}{\partial t}\phi(X(t)) =& \partial_t\phi(x(t), t) + \nabla \phi(x(t), t) \cdot d_tx(t) \\
\frac{\partial}{\partial t}\nabla \phi(x(t), t) =& \nabla \phi_t(x(t),t) + D^2\phi(x(t),t)d_tx(t)
\end{aligned}
\end{equation}
and since 
\beq \label{110}
d_tx(t) = \frac{b_ks_k}{\epsilon}\frac{\nabla^{\perp}\phi(X^{\epsilon}_k)}{|\nabla\phi(X^{\epsilon}_k)|}
+\beta_k f(s_k) \frac{\nabla \phi(X^{\epsilon}_k)}{|\nabla \phi(X^{\epsilon}_k)|}
\eeq
is constant we get
\beq
\frac{\partial^2}{\partial t^2}\phi(X(t)) = \partial_t^2\phi(X(t)) + 2 \nabla \phi_t(X(t)) \cdot d_tx(t)
+\left<D^2\phi(X(t))d_tx(t), d_tx(t)\right>.
\eeq
Taylor expansion of $t \mapsto \phi(X(t))$ at $t^0_{\epsilon}+k\epsilon^2$ gives
\begin{equation} \label{260}
\begin{aligned}
\phi(X^{\epsilon}_{k+1}) -& \phi(X^{\epsilon}_k)  \\
=& \epsilon^2\frac{d}{dt}\phi(X(t^0_{\epsilon}+k\epsilon^2)) + \frac{\epsilon^4}{2}
\frac{d^2}{dt^2}\phi(X(t^0_{\epsilon}+k\epsilon^2)) \\
=& \epsilon^2\left(\partial_t\phi(X^{\epsilon}_k)+ \nabla\phi(X^{\epsilon}_k) d_tx(t) \right)\\
&+\frac{\epsilon^4}{2}\left(\partial_t^2\phi(X^{\epsilon}_k) + 2 \nabla \phi_t(X^{\epsilon}_k)d_tx(t)+
\left<D^2\phi(X^{\epsilon}_k)d_tx(t), d_tx(t)\right>\right)\\
&+o(\epsilon^2) \\
=& \epsilon^2\left(\partial_t\phi(X^{\epsilon}_k)+ |\nabla\phi(X^{\epsilon}_k)|\beta_k f(s_k) \right) \\
&+ \frac{\epsilon^2}{2}b_k^2s_k^2\left<D^2\phi(X^{\epsilon}_k)\frac{\nabla^{\perp}
\phi(X^{\epsilon}_k)}{|\nabla\phi(X^{\epsilon}_k)|}, \frac{\nabla^{\perp}\phi(X^{\epsilon}_k)}
{|\nabla\phi(X^{\epsilon}_k)|}\right> +o(\epsilon^2)\\
=& \epsilon^2\left(\partial_t\phi(X^{\epsilon}_k) + |\nabla \phi(X^{\epsilon}_k)|
(\beta_kf(s_k)-\frac{b_k^2s_k^2}{2}\curv(\phi)_{|X^{\epsilon}_k})\right)+o(\epsilon^2)
\end{aligned}
\end{equation}
in view of
\beq
-\curv(\phi) |D\phi|=|\nabla \phi| \dive \left(\frac{\nabla \phi}{|\nabla \phi|}\right) = 
\left<D^2\phi\frac{\nabla^{\perp}\phi}{|\nabla\phi|}, \frac{\nabla^{\perp}\phi}{|\nabla\phi|}\right>.
\eeq
We use the notation
\beq
\varphi_s(\kappa) = \kappa \frac{s^2}{2}- f(s)
\eeq
for $\kappa \in \mathbb{R}$ and $s>0$ 
and we get
\begin{equation} \label{85}
\begin{aligned}
\phi(X^{\epsilon}_{k+1})-\phi(X^{\epsilon}_k) &\le \epsilon^2
\left\{\partial_t\phi(X^{\epsilon}_k) -
|\nabla \phi(X^{\epsilon}_k)|\varphi_{s_k}(\curv(\phi)_{|X^{\epsilon}_k})\right\}+o(\epsilon^2) 
\end{aligned}
\end{equation}
We want to show by using (\ref{20}) that  
\beq \label{a2}
\partial_t\phi(X^{\epsilon}_k) -
|\nabla \phi(X^{\epsilon}_k)|\varphi_{s_k}(\curv(\phi)_{|X^{\epsilon}_k}) \le \frac{\theta_0}{2}
\eeq
for small $\epsilon$. 
Let us assume for a moment that this is already shown then we have
\begin{equation}\label{93}
\begin{aligned}
\phi(X^{\epsilon}_{k+1})-\phi(X^{\epsilon}_k) 
&\le \epsilon^2 \frac{\theta_0}{2}
\end{aligned}
\end{equation}
and hence
\beq \label{22}
\phi(X^{\epsilon}_k)-\phi(X^{\epsilon}_0) \le \frac{k}{2}\epsilon^2\theta_0<0. 
\eeq
Adding this inequality to (\ref{21}) gives
\beq \label{26}
u^{\epsilon}(X^{\epsilon}_0)-\phi(X^{\epsilon}_0) \le u^{\epsilon}(X^{\epsilon}_k)-\phi(X^{\epsilon}_k)
+\frac{k}{2}\epsilon^2\theta_0.
\eeq
For each $\epsilon$ we consider the finite sequence $(X^{\epsilon}_k)_{0\le k\le k(\epsilon)}$ where 
$k(\epsilon)$ is as follows. Let  
\beq
U =  B_{\de}(x_0, t_0)\backslash B_{\de_1}(x_0, t_0), \quad 0<\de_1<\de,
\eeq
we may assume that $X^{\epsilon}_0 \in  B_{\de_1}(x_0, t_0)$ and that
\beq \label{a1}
u^{\epsilon}(X^{\epsilon}_0)-\phi(X^{\epsilon}_0) \ge \bar u(x_0, t_0)-\phi(x_0, t_0)-\frac{\theta_0}{4}.
\eeq
Let
\beq
k(\epsilon) =\min\{k \in \mathbb{N}: X^{\epsilon}_k \in U\}.
\eeq
Note, that $k(\epsilon)$ is well-defined since (\ref{26}) and (\ref{a1}) imply that the sequence $(X^{\epsilon}_k)$
leaves $B_{\de_1}(x_0, t_0)$ at the latest for $k=[\epsilon^{-2}]$. Hence (for small $\epsilon$ compared to $\de$) there must be an element $X^{\epsilon}_k\in U$.

For a subsequence $X^{\epsilon}_{k(\epsilon)}\rightarrow (x', t') \neq (x_0, t_0)$ and by (\ref{26}) we get
\beq
(\bar u-\phi)(x_0, t_0) \le (\bar u-\phi)(x', t')
\eeq
in contradiction to the fact that $(x_0, t_0)$ is the unique maximum in $B_{\de}(x_0, t_0)$.

We show (\ref{a2}) by distinguishing cases, thereby we assume $\de$ to be sufficiently small. We set $\kappa_0=\curv(\phi)_{|X_0}$ and 
$\kappa_k = \curv(\phi)_{|X^{\epsilon}_k}$.

{\it Case (1):}
If $\kappa_0<0$ then there exists $\epsilon_0=\epsilon_0(\kappa_0)>0$ so that 
we can choose $s_k \in I_{\epsilon}$ with
\beq
\varphi_{s_k}(\kappa_k) = \varphi(\kappa_k)
\eeq
for all $0<\epsilon <\epsilon_0$. Then we use (\ref{20}).

{\it Case (2):} We assume $\kappa_0> 0$. For small $\epsilon$
we have
\beq
\varphi_{\epsilon^{\al_1}}(\kappa_0) < 0 < \varphi_{\epsilon^{-\al_2}}(\kappa_0)
\eeq
so that there is a $s_k \in I_{\epsilon}$ (actually not depending on $k$) with
\beq
\varphi_{s_k}(\kappa_0) =0.
\eeq
It follows that (for $\de \rightarrow 0$)
\beq
\varphi_{s_k}(\kappa_k)\rightarrow 0 = \varphi(\kappa_0).
\eeq

{\it Case (3):}
We assume $\kappa_0=0$. 

{\it Case (3a):} Let $\kappa_k\ge 0$.
For small $\epsilon$ we have 
\beq 
\varphi_{\epsilon^{\al_1}}(\kappa_k)<0.
\eeq
If 
\beq
\varphi_{\epsilon^{-\al_2}}(\kappa_k)>0
\eeq
we choose $s_k \in I_{\epsilon}$ with
\beq
\varphi_{s_k}(\kappa_k)=0
\eeq
and are ready, otherwise we set $s_k =\epsilon^{-\al_2}$ and get (as $\epsilon \rightarrow 0$)
\beq
0 \leftarrow -f(\epsilon^{-\al_2})\le \varphi_{s_k}(\kappa_k)\le 0
\eeq
which also implies the claim.

{\it Case (3b):} Let $\kappa_k< 0$.
Let 
\beq \label{101}
\frac{2\al_2}{1-\ga}>\mu>2 \al_2
\eeq
and distinguish the following cases.

{\it Case (3ba):}
If $|\kappa_k|\le \epsilon^{\mu}$ we set $s_k=\epsilon^{-\al_2}$
and can estimate
\beq
-\varphi_{s_k}(\kappa_k) \le \epsilon^{\mu-2\al_2}+c\epsilon^{\al_2
\frac{2\ga}{1-\ga}}.
\eeq

{\it Case (3bb):}
Let $|\kappa_k|> \epsilon^{\mu}$. We calculate the maximizing $s>0$ of 
$\varphi_s(\kappa_k)$. 
For general $\kappa<0$ the maximizing $s>0$ of $\varphi_s(\kappa)$ is given by
\beq \label{106}
\frac{d}{ds}\varphi_{s}(\kappa) =0
\eeq
hence
\begin{equation} \label{105}
\begin{aligned}
 s_{\max} =& \left\{ \frac{\ga-1}{2\ga c_{\ga}}\kappa\right\}^{\frac{\ga-1}{2}}.
 \end{aligned}
\end{equation} 
Evaluated for $\kappa = \kappa_k$ we get the maximizing 
\begin{equation}
\begin{aligned}
 s_k =& \left\{ \frac{\ga-1}{2\ga c_{\ga}} \kappa_k\right\}^{\frac{\ga-1}{2}} \\
\le& c\epsilon^{\mu\frac{\ga-1}{2}}.
 \end{aligned}
\end{equation} 
The so defined $s_k$ lies in $I_{\epsilon}$ if 
\beq
\mu \frac{\ga-1}{2}>-\al_2
\eeq 
which is the case in view of (\ref{101}).

(ii)  Let us consider the case $\nabla \phi(x_0, t_0)= 0$. We define the $X^{\epsilon}_k$ as 
before by (\ref{25}) but we replace in equation (\ref{25}) the directions 
$\frac{\nabla^{\perp}\phi(X^{\epsilon}_k)}{|\nabla \phi(X^{\epsilon}_k|}$ and 
$\frac{\nabla \phi(X^{\epsilon}_k)}{|\nabla \phi(X^{\epsilon}_k|}$ by
$v_k^{\perp}$ and $v_k$ respectively, $v_k$ an unit vector with
\beq
\left<v_k^{\perp}, \nabla \phi(X^{\epsilon}_k)\right>=0.
\eeq
The $s_k$ are chosen as will become clear in the following and the $\beta_k$ and $b_k$ are chosen as before, i.e. so that they maximize $u^{\epsilon}(X^{\epsilon}_{k+1})$ for given $s_k, v_k$.
Then we have $u^{\epsilon}(X^{\epsilon}_0) \le u^{\epsilon}(X^{\epsilon}_k)$ as before. 
%We assume $\al_1>\ga$.
Looking at (\ref{260}) and (\ref{110}) and adapting it to the present
situation gives
\begin{equation} \label{261}
\begin{aligned}
\phi(X^{\epsilon}_{k+1}) -& \phi(X^{\epsilon}_k)  \\
=& \epsilon^2\left(\partial_t\phi(X^{\epsilon}_k)+ \left<\nabla\phi
(X^{\epsilon}_k), v_k\right>\beta_k f(s_k) \right) \\
&+ \frac{\epsilon^2}{2}b_k^2s_k^2\left<D^2\phi(X^{\epsilon}_k)v_k^{\perp}, v_k^{\perp}\right> +o(\epsilon^2).
\end{aligned}
\end{equation}
Our goal is to ensure that (\ref{93}) also holds in this case.
We set  
\beq
\la^{\epsilon}_k:=|\left<\nabla \phi(X^{\epsilon}_k), v_k\right>|
\rightarrow 0
\eeq
as $\de \rightarrow 0$.
If $\la^{\epsilon}_k=0$ then we are ready by setting 
$s_k = \epsilon^{\al_1}$. Let us assume that $\la^{\epsilon}_k\neq 0$.
We get the following inequality 
\begin{equation} \label{88}
\begin{aligned}
\phi(X^{\epsilon}_{k+1})-\phi(X^{\epsilon}_k) &\le \epsilon^2
\left\{\partial_t\phi(X^{\epsilon}_k) -\la^{\epsilon}_k
\varphi_{s_k}(A^{\epsilon}_k)\right\}+o(\epsilon^2) 
\end{aligned}
\end{equation}
where
\beq
A^{\epsilon}_k = -{\la^{\epsilon}_k}^{-1}
|\left<D^2\phi(X^{\epsilon}_k)v^{\perp}_k, v^{\perp}_k\right>|.
\eeq

{\it Case (1):} 
For those $A^{\epsilon}_k$ which are contained in some sufficiently 
large interval $[a,b] \subset (-\infty, 0)$ 
(where $a,b$ do not depend on $\epsilon$ and $k$) we 
choose $s_k\in I_{\epsilon}$
so that 
$\varphi_{s_k}( A^{\epsilon}_k ) =  \varphi( A^{\epsilon}_k)$ 
for small
$\epsilon$.

{\it Case (2):} 
If $A^{\epsilon}_k\ge 0$ we set $s_k=\epsilon^{ -\al_2}$ and are ready.

{\it Case (3):} 
If $0>A^{\epsilon}_k >b$ we choose $\mu$ according to (\ref{101})
and distinguish cases.

{\it Case (3a):} If 
$| \la^{\epsilon}_kA^{\epsilon}_k|\le \epsilon^{\mu}$ then we set $s_k = \epsilon^{-\al_2}$ 
and have
\beq
|\la^{\epsilon}_k \varphi_{s_k}(A^{\epsilon}_k)| \le c\epsilon^{\mu-2\al_2}+c 
\epsilon^{\frac{2\al_2 \ga}{1-\ga}}.
\eeq

{\it Case (3b):}
If
$| \la^{\epsilon}_kA^{\epsilon}_k|> \epsilon^{\mu}$ 
then $|A^{\epsilon}_k|> \frac{\epsilon^{\mu}}{\la^{\epsilon}_k}$.
We calculate the maximizing $s>0$ of $\varphi_s(A^{\epsilon}_k)$ according to
(\ref{105}) and get
\begin{equation}
\begin{aligned}
 s_k =& \left\{ \frac{\ga-1}{2\ga c_{\ga}}A^{\epsilon}_k\right\}^{\frac{\ga-1}{2}} \\
\le& c\epsilon^{\mu\frac{\ga-1}{2}}{\la^{\epsilon}_k}^{\frac{1-\ga}{2}}.
 \end{aligned}
\end{equation} 
The so defined $s_k$ lies in $I_{\epsilon}$ if 
\beq
\mu \frac{\ga-1}{2}>-\al_2
\eeq 
which is the case in view of (\ref{101}).

{\it Case (4):} 
If $A^{\epsilon}_k < a$ we choose 
\beq \label{95}
\frac{2\al_1}{1-\ga}>\mu > \ga \frac{2\al_1}{1-\ga}
\eeq 
and distinguish cases.

{\it Case (4a):}
We assume $\la^{\epsilon}_k\le \epsilon^{\mu}$.
We set $s_k = \epsilon^{\al_1}$, calculate
\beq
\la^{\epsilon}_k f(s_k) \le c\epsilon^{\mu+\al_1\frac{2\ga}{\ga-1}}\rightarrow 0
\eeq
so that (\ref{93}) follows as well. 

{\it Case (4b):}
We assume $\la^{\epsilon}_k> \epsilon^{\mu}$. 
Then 
\beq
|A^{\epsilon}_k| \le c \epsilon^{-\mu}.
\eeq
We calculate the maximizing $s>0$ of $\varphi_s(A^{\epsilon}_k)$
and get
\begin{equation}
\begin{aligned}
 s_k =& \left\{ \frac{\ga-1}{2\ga c_{\ga}}A^{\epsilon}_k\right\}^{\frac{\ga-1}{2}} \\
\ge& c\epsilon^{\mu\frac{1-\ga}{2}}.
 \end{aligned}
\end{equation} 
To ensure that the so defined $s_k$ lies in $I_{\epsilon}$ we need 
\beq
\al_1>\mu \frac{1-\ga}{2} 
\eeq 
which follows from (\ref{95}). There holds
\beq
\varphi_{s_k}(A^{\epsilon}_k) = 
\varphi(A^{\epsilon}_k) = -|A^{\epsilon}_k|^{\ga}
\eeq
and hence 
\beq
\la^{\epsilon}_k\varphi_{s_k}(A^{\epsilon}_k) = -{\la^{\epsilon}_k}^{1-\ga}{|\left<D^2\phi_t(
X^{\epsilon}_k)v_k^{\perp}, v_k^{\perp}\right>|}^{\ga} \rightarrow 0.
\eeq 

In all cases we get
\beq
\phi(X^{\epsilon}_{k+1})-\phi(X^{\epsilon}_k) \le \epsilon^2 \frac{\theta_0}{2}.
\eeq
Now, we argument as in (i). 
\end{proof}

%%%%%%%%% Lemma Supsolution
\begin{lemma} \label{d1}
$\underline u$ is a viscosity supersolution of (\ref{3}).
\end{lemma}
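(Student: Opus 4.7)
The plan is to dualise the proof of the preceding lemma, exchanging the roles of the two players and reversing the inequalities. I argue by contradiction: suppose there is a smooth $\phi$ and a point $(x_0, t_0)$ at which, after perturbing $\phi$ by a nonnegative bump with zero derivatives at $(x_0, t_0)$ up to second order, $\underline u - \phi$ attains a strict local minimum on $B_{\de}(x_0, t_0)$ and
$$\phi_t - |\nabla \phi|\varphi(\curv(\phi)) \ge \theta_0 > 0$$
at $(x_0, t_0)$ (or $\phi_t \ge \theta_0 > 0$ if $\nabla\phi(x_0, t_0) = 0$). Pick $(x^0_{\epsilon}, t^0_{\epsilon}) \to (x_0, t_0)$ with $u^{\epsilon}(x^0_{\epsilon}, t^0_{\epsilon}) \to \underline u(x_0, t_0)$ and set $X^{\epsilon}_0 = (x^0_{\epsilon}, t^0_{\epsilon})$. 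At each step take $(v_k, s_k) \in S^1 \times I_{\epsilon}$ to be an (approximate) minimiser in (\ref{40}) at $X^{\epsilon}_k$ and pick $(b_k, \beta_k) \in \{-1,1\}^2$ ourselves, specifically $b_k = \mathrm{sign}\langle\nabla\phi(X^{\epsilon}_k), v_k\rangle$ and $\beta_k = \mathrm{sign}\langle\nabla\phi(X^{\epsilon}_k), v_k^\perp\rangle$. By the min--max identity $u^{\epsilon}(X^{\epsilon}_{k+1}) \le u^{\epsilon}(X^{\epsilon}_k)$ regardless of $(b_k, \beta_k)$, and iterating yields $u^{\epsilon}(X^{\epsilon}_k) \le u^{\epsilon}(X^{\epsilon}_0)$.

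The main claim to be established is that, with this choice of $(b_k, \beta_k)$ and for every $(v_k, s_k)$ the adversary may play,
$$\phi(X^{\epsilon}_{k+1}) - \phi(X^{\epsilon}_k) \ge \tfrac{\epsilon^2\theta_0}{2} + o(\epsilon^2).$$
Together with the $u^{\epsilon}$-bound this gives $(u^{\epsilon}-\phi)(X^{\epsilon}_k) \le (u^{\epsilon}-\phi)(X^{\epsilon}_0) - k\epsilon^2\theta_0/2$, and the same escape-time argument as in the subsolution proof (run the trajectory until $X^{\epsilon}_{k(\epsilon)}$ enters $U = B_{\de}\setminus B_{\de_1}$, extract a subsequential limit $(x', t') \ne (x_0, t_0)$, and use lower semicontinuity of $\underline u$) then yields $(\underline u-\phi)(x', t') \le (\underline u-\phi)(x_0, t_0)$, contradicting strict minimality.

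The step-wise inequality is extracted from the Taylor expansion analogous to (\ref{260})-(\ref{261}): with the above sign choices the two linear cross-terms become nonnegative and
\begin{equation*}
\phi(X^{\epsilon}_{k+1}) - \phi(X^{\epsilon}_k) \ge \epsilon^2\partial_t\phi + \epsilon s_k \lambda^{\epsilon}_k + \epsilon^2 f(s_k)|\langle\nabla\phi, v_k^\perp\rangle| + \tfrac{\epsilon^2 s_k^2}{2}\langle D^2\phi\, v_k, v_k\rangle + o(\epsilon^2),
\end{equation*}
with $\lambda^{\epsilon}_k := |\langle\nabla\phi(X^{\epsilon}_k), v_k\rangle|$. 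One then splits into part (i) $\nabla\phi(x_0, t_0) \ne 0$ and part (ii) $\nabla\phi(x_0, t_0) = 0$ and, within each, performs a case distinction on the sign and magnitude of $\kappa_k = \curv(\phi)_{|X^{\epsilon}_k}$ and of $\lambda^{\epsilon}_k$, in parallel to the four-fold analysis of the subsolution lemma. Whenever $v_k$ is far from $\nabla^\perp\phi/|\nabla\phi|$ the first-order term $\epsilon s_k\lambda^{\epsilon}_k \gtrsim \epsilon^{1+\al_1}\lambda^{\epsilon}_k$ alone dominates; whenever $v_k$ is close to $\nabla^\perp\phi/|\nabla\phi|$ and $\kappa_k \le 0$ one exploits $\varphi_{s_k}(\kappa) \le \varphi(\kappa) = -|\kappa|^{\ga}$ together with the supersolution-failure hypothesis.

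The principal obstacle is the subcase $\kappa_k > 0$ with $v_k$ close to $\nabla^\perp\phi/|\nabla\phi|$: here $\varphi_{s_k}(\kappa_k) = \kappa_k s_k^2/2 - f(s_k)$ is no longer capped by $\varphi(\kappa_k) = 0$, and a large $s_k$ makes the quadratic term strongly negative. The boundedness of $I_{\epsilon}$ -- the very modification from \cite{KS} emphasised in the introduction -- caps any such contribution at $O(\epsilon^{2-2\al_2})$, and balancing it against the positive $\epsilon^2 f(s_k)|\nabla\phi|$ and $\epsilon^2\partial_t\phi$ terms forces exactly the constraints (\ref{601})-(\ref{401}) and hence $\ga > 1/3$. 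A threshold auxiliary exponent $\mu$ chosen in analogy with (\ref{101}) and (\ref{95}) of the subsolution proof plays the same role here; as announced after (\ref{401}), $\al_1 < 1$ is needed in Case (1) of part (ii), $\al_2 < 1/3$ in Case (2ba), and $\al_2 < \al_1\cdot 2\ga/(1-\ga) - 1$ in part (i). Verifying that in every case the combined lower bound meets $\epsilon^2\theta_0/2$ is the technical crux and longest part of the argument.
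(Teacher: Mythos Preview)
Your overall strategy and case structure are correct and track the paper's approach; the cases with $\curv\phi\le 0$ go through essentially as you indicate. The genuine gap is in the case $\nabla\phi(x_0,t_0)\neq 0$ with $\kappa_k>0$ (the paper's Case~(2bb)), and the analogous difficulty when $\nabla\phi(x_0,t_0)=0$. Suppose the adversary plays $v_k$ tangent to the level set of $\phi$ and $s_k=\epsilon^{-\al_2}$. Then your first-order term $\epsilon s_k\lambda^{\epsilon}_k$ vanishes, the $f$-term equals $\epsilon^2 f(\epsilon^{-\al_2})|\nabla\phi|=O\bigl(\epsilon^{2+2\al_2\ga/(1-\ga)}\bigr)=o(\epsilon^2)$, while the quadratic term is $-\tfrac12\epsilon^{2-2\al_2}\kappa_k|\nabla\phi|$, which dominates $\epsilon^2\partial_t\phi$. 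Since this quadratic contribution is independent of $(b_k,\beta_k)$, your proposed ``balancing'' cannot work: the constraints (\ref{601})--(\ref{401}) do \emph{not} rescue the naive sign choice here, and the step-wise inequality fails.

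What the paper actually does in this situation is a geometric \emph{reparametrization} that you do not mention. It observes that any point on the large-$s$ branch of the curve $\Gamma=\{(\epsilon s,\epsilon^2 f(s)):s\in I_{\epsilon}\}$ lies on a rotation (about $X^{\epsilon}_k$) of the small-$s$ branch; the key inequality enabling this is that the left endpoint of $\Gamma$ is farther from the origin than the right endpoint, and this is precisely where $\al_2<2\al_1\ga/(1-\ga)-1$ from (\ref{401}) is used. Consequently the destination $p$ corresponding to the adversary's large $s_k$ can be rewritten as $p=X^{\epsilon}_k+(\epsilon\tilde s_k\tilde v_k^{\perp}+\epsilon^2 f(\tilde s_k)\tilde v_k,\epsilon^2)$ with a \emph{small} $\tilde s_k$ and $\tilde v_k=O_p v_k$ rotated by an angle close to $\pi/2$. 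In this parametrization the explicit quadratic term $\tfrac{\epsilon^2\tilde s_k^2}{2}\langle D^2\phi\,\tilde v_k^{\perp},\tilde v_k^{\perp}\rangle$ is $o(\epsilon^2)$, and one then chooses between the antipodal pair $p,-p$ (that is, $b_k=\beta_k=\pm1$ jointly, not independently) to make the remaining linear terms nonnegative. The same rotation device is the entire content of the paper's part~(i) (the case $\nabla\phi(x_0,t_0)=0$). Your remark that a threshold exponent $\mu$ ``in analogy with (\ref{101}) and (\ref{95})'' suffices is off-target: those exponents belong to the subsolution lemma only; in the supersolution proof the decisive new ingredient is this rotation argument, and without it the case $\kappa_k>0$ cannot be closed.
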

\begin{proof}
We argue by contradiction. If not, then there is a smooth $\phi$ such that $(x_0, t_0)$ is a local
minimum of $\underline{u}-\phi$ with
 \beq \label{30}
 \phi_t -|\nabla \phi|\varphi(\curv(\phi)) \ge \theta_0 > 0 \text{ at }(x_0, t_0)
 \eeq
 if $\nabla \phi(x_0, t_0)\neq 0$ or we have $\nabla \phi(x_0, t_0)=0$ and 
 \beq
 \phi_t \ge \theta_0 >0 
 \eeq
 in $(x_0, t_0)$. 
 Again without loss of generality, we can assume the minimum is strict. 
 Changing $\theta_0$ if necessary, we can also find a $\de$-neighborhood of $(x_0, t_0)$ 
 in which these assertions hold. Again we can find 
 $(x^0_{\epsilon}, t^0_{\epsilon})\rightarrow (x_0, t_0)$ such that 
 $ u^{\epsilon}(x^0_{\epsilon}, t^0_{\epsilon})\rightarrow \underline u(x_0, t_0)$. 
 Taking $s_0$ and the unit-norm $v_0$ that achieve the minimum in the characterization (\ref{40}) we find
 \beq
 u^{\epsilon}(x^0_{\epsilon}, t^0_{\epsilon}) = \max_{b, \beta\in\{-1,1\}}
 u^{\epsilon}(x^{\epsilon}_0+b \epsilon s_0 v_0^{\perp} + \beta \epsilon^2 f(s_0)v_0, t^{\epsilon}_0 + \epsilon ^2).
 \eeq
 We set $X^{\epsilon}_0 = (x^0_{\epsilon}, t^0_{\epsilon})$, 
 $X^{\epsilon}_1 = (x+b_0 \epsilon s_0 v_0^{\perp} + \beta \epsilon^2 f(s_0)v_0, t^{\epsilon}_0 + \epsilon ^2)$
and inductively
\beq \label{210}
X^{\epsilon}_{k+1}= X^{\epsilon}_k + 
(b_k \epsilon s_k v_k^{\perp} + \beta_k \epsilon^2 f(s_k)v_k, \epsilon ^2)
\eeq
where $s_k, v_k$ are chosen recursively (and also depend on $\epsilon) $ so that the 
minimum is attained and $b_k, \beta_k$ will be chosen later. We have for all $b_k, \beta_k\in \{1,-1\}$
\beq
u^{\epsilon}(X^{\epsilon}_k) \ge u^{\epsilon}(X^{\epsilon}_{k+1})
\eeq
and thus 
\beq
u^{\epsilon}(X_0) \ge u^{\epsilon}(X_k).
\eeq
On the other hand, extending the $X^{\epsilon}_k$ into an affine path by affine interpolation
as before
and doing a Taylor expansion gives -- adapting (\ref{260}) and (\ref{110}) to the present
situation -- 
\begin{equation} \label{402}
\begin{aligned}
\phi(X^{\epsilon}_ {k+1})&-\phi(X^{\epsilon}_k) 
=  \epsilon b_k s_k \left<\nabla \phi (X^{\epsilon}_k), v_k^{\perp}\right> \\
& + \epsilon^2\left(\partial_t \phi(X^{\epsilon}_k)
+\beta_k f(s_k) \left<\nabla \phi(X^{\epsilon}_k), v_k\right>+
\frac{b_k^2s_k^2}{2}\left<D^2\phi(X^{\epsilon}_k)v_k^{\perp}, v_k^{\perp}\right>\right)  \\
& + O(\epsilon^3).
\end{aligned}
\end{equation}
Our goal is to show 
\begin{equation}\label{108}
\begin{aligned}
\phi(X^{\epsilon}_{k+1})-\phi(X^{\epsilon}_k) 
&\ge \epsilon^2 \frac{\theta_0}{2}
\end{aligned}
\end{equation}
for all sufficiently small $\de>0$.

(i) 
Let us consider the case where $\nabla \phi(x_0, t_0)= 0$. Let us denote the RHS of (\ref{210})
for $b_k=\beta_k=1$ by $z_1$ and for $b_k=\beta_k=-1$ by $z_2$. 
By replacing $v_k$, $s_k$, $b_k$ and $\beta_k$ by suitable 
$\tilde v_k$, $\tilde s_k$, $\tilde b_k$ and $\tilde \beta_k$ respectively, we may assume that
\beq \label{1006}
z_1= X^{\epsilon}_k + 
(\tilde b_k \epsilon \tilde s_k \tilde v_k^{\perp} + \tilde \beta_k \epsilon^2 
f(\tilde s_k) \tilde v_k, \epsilon^2)
\eeq
with $|\tilde s_k|\le \epsilon^{\mu}$ where $0<\mu<1$. 
We note that if we change the signs of $\tilde b_k$ and $\tilde \beta_k$ on the RHS of (\ref{1006}) then this 
RHS is equal to $z_2$. We explain why (\ref{1006}) holds.
 Let us denote for $0<a<b$ the following piece of 
a graph by
\beq
\Gamma(a,b) = \left\{(\epsilon s, \epsilon^2f(s)): a\le s \le b\right\}.
\eeq
The minimum of $\dist(0, \cdot)$ on $\Gamma=\Gamma(\epsilon^{\al_1}, \epsilon^{-\al_2})$
is attained in $(\epsilon \tilde s, \epsilon^2f(\tilde s))$ where  
\beq \label{1005}
\tilde s = \left\{\frac{1-\ga}{2\ga \epsilon^2}\right\}^{\frac{\ga-1}{2\ga+2}},
\eeq
here and in the following we denote by the origin 0 the point $X^{\epsilon}_k$.
Furthermore, the distance of the left endpoint
of $\Gamma$ to 
the origin is larger than 
the distance of the right endpoint of $\Gamma$
to the origin, to see this we omit constants and check the orders of $\epsilon$ in the squares
of these distances. We get 
\begin{equation} \label{left_endpoint}
\begin{aligned}
\epsilon^{2+2\al_1} + \epsilon^{4-\al_1\frac{4\ga}{1-\ga}}
\end{aligned}  
\end{equation}
for the left endpoint and
\begin{equation}
\begin{aligned}
\epsilon^{2-2\al_2} + \epsilon^{4+\al_2\frac{4\ga}{1-\ga}}
\end{aligned}  
\end{equation}
for the right endpoint.
Therefore we must have
\beq
4- \al_1 \frac{4\ga}{1-\ga} < 2-2\al_2
\eeq
which follows from (\ref{401}).
Hence all points $p \in \Gamma(\tilde s, \epsilon^{-\al_2})$
can be 'covered' by 
rotating $\Gamma(\epsilon^{\al_1}, \tilde s)$
around the origin, i.e. 
there is a rotation $O_p$ of
the plane around the origin (depending on $p$) such that
\beq \label{7001}
p \in O_p\Gamma(\epsilon^{\al_1}, \tilde s).
\eeq
 But this rotation can be realized by choosing the quantities with a tilde 
suitable.

Now, we look at the RHS of (\ref{402}) in the situation that $s_k, v_k, b_k, \beta_k$ carry a tilde.
The only possibly 'bad' summands therein are the summand which contains $\tilde b_k$ and the one
with $\tilde \beta_k$ (note, that $\tilde s_k$ is small). Let us denote the first by $A$ and the second by $B$.
If $A+B\ge0$ we set
 $X^{\epsilon}_{k+1}=z_1$ and if $A+B<0$ we set  $X^{\epsilon}_{k+1}=z_2$.

(ii)
Let us consider the case where $\nabla \phi(x_0, t_0)\neq 0$ and set 
$d_0=|\nabla \phi(x_0, t_0)|$. We let $p$, $-p$, $q$ and $-q$ be the four possible points for $X^{\epsilon}_{k+1}$ depending on the choice of $b_k, \beta_k$ where we let $p$ be the point with $b_k=\be_k=1$. We choose $b_k, \beta_k$ so that the summands in which they appear are non-negative. Let $c_0>\tilde s$ 
be a sufficiently small fixed constant (not depending on $\epsilon$) then (\ref{108}) follows from (\ref{402}) as can be seen from the following cases.

{\it Case (1):} We assume $s_k \le c_0$. 
Let us denote the summands of the resulting RHS of (\ref{402})
from left to right by $A$, $B$, $C$, and $D$. If $|\left<\nabla \varphi(X^{\epsilon}_k), v_k^{\perp}\right>|>\frac{d_0}{2}$ then $A\ge \frac{d_0}{2}\epsilon^{1+\al_1}\ge \theta_0\epsilon^2$ is the dominating summand, otherwise 
$ |\left< \nabla \varphi (X^{\epsilon}_k), v_k \right> | > \frac{d_0}{2}$ so that $C\ge \theta_0\epsilon^2 $ dominates $B$ and $D$ and hence the claim follows. 

{\it Case (2):} We assume $s_k > c_0$.

{\it Case (2a):} Assume that there is a sufficiently large constant $c_1>0$ so that
\beq
\left<\nabla \phi(X^{\epsilon}_k), v_k^{\perp}\right> > c_1 \epsilon s_k.
\eeq
Then the first summand on the RHS of (\ref{402}) is dominating and gives the desired estimate.

{\it Case (2b):} Let 
\beq
\left<\nabla \phi(X^{\epsilon}_k), v_k^{\perp}\right> \le c_1 \epsilon s_k.
\eeq

{\it Case (2ba):} Let $\curv(\phi)_{|X^{\epsilon}_k}\le 0$.

We have
\beq
v_k^{\perp} = \frac{\nabla \phi(X^{\epsilon}_k)^{\perp}}{|\nabla \phi(X^{\epsilon}_k)|}+c \epsilon s_k
\eeq
where we possibly have to change the signs of both $b_k$ and $\beta_k$.
We denote the summands of the RHS of (\ref{402}) from left to right by $A$, $B$, $C$ and $D$. We have
\beq
|A| \le c \epsilon^2 s_k^2.
\eeq
and
\begin{equation}
\begin{aligned}
\epsilon^{-2}&(B+C+D) \\
\ge & \partial_t\phi(X^{\epsilon}_k) + f(s_k) |\nabla \phi(X^{\epsilon}_k)| \\
& + \frac{s_k^2}{2}\left<D^2\phi(X^{\epsilon}_k)\frac{\nabla \phi(X^{\epsilon}_k)^{\perp}}{|\nabla \phi(X^{\epsilon}_k)|}, \frac{\nabla \phi(X^{\epsilon}_k)^{\perp}}{|\nabla \phi(X^{\epsilon}_k)|}\right> -c\epsilon s_k^3  \\
= & \partial_t\phi(X^{\epsilon}_k) + f(s_k) |\nabla \phi(X^{\epsilon}_k)| -   |\nabla \phi(X^{\epsilon}_k)| \frac{s_k^2}{2}\curv (\phi)_{|X^{\epsilon}_k}
-c\epsilon s_k^3  \\
= &  \partial_t\phi(X^{\epsilon}_k) -  |\nabla \phi(X^{\epsilon}_k)|\varphi_{s_k}(\curv (\phi)_{|X^{\epsilon}_k})
-c\epsilon s_k^3  \\
\ge &  \partial_t\phi(X^{\epsilon}_k) -  |\nabla \phi(X^{\epsilon}_k)|\varphi(\curv (\phi)_{|X^{\epsilon}_k})
-c\epsilon s_k^3  \\
\ge &  \frac{\Theta_0}{2}
-c\epsilon s_k^3 
\end{aligned}
\end{equation}
Now, the claim follows from assumption (\ref{401}).

{\it Case (2bb):} Let $\curv(\phi)_{|X^{\epsilon}_k}> 0$. W.l.o.g. we may assume that
\beq
\curv(\phi)_{|(x_0, t_0)}\ge 0,
\eeq
 otherwise we choose a smaller $\de$-neighborhood.
Then $\partial_t \phi(X^{\epsilon}_k)\ge \frac{\Theta}{2}$.
We define 
\beq \label{1010}
\tilde v_k = O_p v_k, \quad \tilde v_k^{\perp} = O_p v_k^{\perp}
\eeq
where $O_p$ as in (\ref{7001}) (and $p$ the point corresponding to $b_k=\beta_k=1$)
then
\beq
p = X^{\epsilon}_k + 
( \epsilon \tilde s_k \tilde  v_k^{\perp} + \tilde  \epsilon^2 f(\tilde s_k)\tilde  v_k, 
\epsilon ^2)
\eeq
with suitable small $\tilde s_k$. Then
\beq
\left|\left<\nabla \phi(X^{\epsilon}_k), \tilde v_k^{\perp}\right>\right| \ge \tilde c_0>0.
\eeq
We denote the terms on the RHS of (\ref{402}) (now in the situation of the quantities with a tilde) from left to right by $A$, $B$, $C$ and $D$. There holds $\frac{B}{2}\ge |D|$. By considering $-p$ instead of $p$ we may assume that $A+C\ge 0$. Since $B \ge \epsilon^2 \frac{\Theta}{2}$ the claim follows.

We used that the rotation angle $\al$ of $O_p$ is almost $\frac{\pi}{2}$, hence we supplement an estimate
for $\al$. We write $\al=\al_2-\al_1$ with
\beq
\tan  \al_1 = \frac{\epsilon^2f(s_k)}{\epsilon s_k} = c\epsilon s_k^{\frac{1+\ga}{\ga-1}}, \quad s_k \ge c_0
\eeq
and
\beq
\tan \al_2 = \frac{\epsilon^2f(\tilde s_k)}{\epsilon \tilde s_k}, \quad \tilde s_k \le \tilde s
\eeq
where 
\beq
\epsilon^2 f( \tilde s_k) \ge c \epsilon s_k,
\eeq
or, equivalently,
\beq
\tilde s_k \le c\epsilon^{\frac{1-\ga}{2\ga}}s_k^{\frac{\ga-1}{2\ga}}
\eeq
so that 
\beq
\tan \al_2 \ge \epsilon^{\frac{\ga-1}{2\ga}}s_k^{\frac{1+\ga}{2\ga}}.
\eeq
Altogether this gives
\beq
\al_1 = O(\epsilon s_k^{\frac{1+\ga}{\ga-1}}) \quad \wedge \quad \left|\al_2-\frac{\pi}{2}\right| = 
O\left(\epsilon^{\frac{1-\ga}{2\ga}}s_k^{-\frac{1+\ga}{2\ga}}\right).
\eeq
\end{proof}

\section{Numerical scheme}
\label{sec:3}

Let $\epsilon>0$ be a small step size and $h>0$ the spatial step size. We consider 
the points $\Omega_h=h\mathbb{Z}^2$
forming a rectangular grid in the plane. Our value function is given at the final time $T>0$,
\beq
u^{\epsilon}(\cdot, T) = u_0
\eeq
where we assume that $u_0=0$ in $\mathbb{R}^2\setminus \bar \Omega$.
The calculation of the value function is backward in time.
Let $x \in \Omega_h$ be a grid point. In order to calculate $u^{\epsilon}(x, t-\epsilon^2)$ from 
$u^{\epsilon}(\cdot, t)$ we
propose the following stategy. 

We discretize the control set $I_{\epsilon}$ by 
\beq
I^{\Delta s}_{\epsilon}=\{\epsilon^{\al_1}+r\Delta s: 0 \le r \le r_0\}, \quad \Delta s  = 
\frac{\epsilon^{-\al_2}-\epsilon^{\al_1}}{r_0} 
\eeq
and the set of unit vectors  $\{v\in \mathbb{R}^2:\|v\|=1\}$ by the discrete subset
\beq
S_{l_0}=\{(\cos \al, \sin \al)): \al = \frac{2\pi l}{l_0}, 0 \le lÊ\le l_0\}
\eeq
where $r_0, l_0$ are sufficiently large natural numbers. We then set
\beq \label{discrete2}
u^{\epsilon}(x, t-\epsilon^2) = \min_{v \in S_{l_0}, s \in I^{\Delta s}_{\epsilon}}\max_{b, \beta \in \{-1,1\}}u^{\epsilon}
(z, t)
\eeq
where 
\beq \label{ab1}
z= x + b \epsilon s v 
+ \beta \epsilon^2 f(s) v^{\perp}
\eeq
and
$u^{\epsilon}$ on the RHS of (\ref{discrete2}) is replaced by the value of a bilinear 
interpolating function if $z$ is not a grid point.

Let us look at the relative scaling of $\epsilon$ and $h$ in the special case of $\ga=0.5$. 
From (\ref{1005}) and (\ref{left_endpoint}) we know that the distance from $z$ to $x$ is at least 
$c\epsilon^{\frac{4}{3}}$ and at most $c\epsilon^{2(1-\al_1)}$, note, that $\frac{1}{2}<\al_1<1$ and 
$0<\al_2<\min(2\al_1-1, \frac{1}{3})$. In order to include second order information 
of the value function in the optimization problem (\ref{discrete2}) it is natural to 
assume that the range of the control variable $z$
is not covered by a square of size $h^2$, so that we assume $h<\epsilon^{\frac{4}{3}}$. Then we get for the time step size the estimate $\epsilon^2>h^{\frac{3}{2}}$ and the standard interpolation error in the supremum norm is of order $h^2=o(\epsilon^2)$.

\section{Numerical examples}

\label{sec:4}

We consider the case of a shrinking circle with initial radius 
$R_0=1$ and different values of $\frac{1}{3}<\gamma<1$. As initial function we use
\beq
u_0(x)=\max(|x|^{\ga+1}-R_0^{\ga+1},0)^2
\eeq
with solution 
\beq
u(x,t) = \max(|x|^{\ga+1}-R_0^{\ga+1}+(\ga+1)t, 0)^2
\eeq
for $0 \le t < T_{\max}=\frac{R_0^{\gamma+1}}{\gamma+1}$.
To define the value function $u^{\epsilon}$ completely we have to specify the parameters
$\epsilon, \alpha_1$ and $\alpha_2$.
For the numerical approximation of $u^{\epsilon}$ we have to specify the discretization 
parameters $h, l_0, r_0$.

\subsection{Search of good parameters $\al_1, \al_2$ for the example case 
$\epsilon=0.08$ and $\gamma=0.7$}
 \label{subsec:1}
 In this subsection we consider the case $\epsilon=0.08$ and $\gamma=0.7$ (then 
 $T_{\max}\approx 0.59$) and 
 test different values of $\al_1, \al_2$ while the 
 numerical parameters $h=0.01$, $l_0=160$ are fixed and 
$r_0$ is adapted so that $I_{\epsilon}$ is discretized equidistantly 
with step size $0.01$.

In order to determine good values for $\alpha_1$ and $\alpha_2$ we 
denote the lower bound for $\alpha_1$ in 
(\ref{601})
by $m_1$, the upper bound by $m_2$ and the upper bound for 
$\alpha_2$ in (\ref{401}) by $m_3$. We introduce a scaling factor 
$\scale$ and set
\beq
\alpha_1 = m_1+\scale \cdot (m_2-m_1) \quad \wedge \quad \alpha_2 = \scale 
\cdot m_3.
\eeq

The approximation error, i.e. the difference $u-u^{\epsilon}$, 
is measured with respect to the 
 $l_{\infty}({h\mathbb{Z}^2})$-norm and with respect to the
 $l_1(h\mathbb{Z}^2)$-norm (more precisely, we scale the latter norm by $h^2$)
 at each time step. Then the supremum over all time steps
until time $T=0.12$ of both norms is taken and denoted by $l_{\infty}$-error and $l_1$-error. 

Table \ref{tab:1} confirms the expected behavior that a larger control interval $I_{\epsilon}$ leads
to better convergence properties.

% For tables use
\begin{table}
% table caption is above the table
\caption{Error for $\epsilon=0.08$ and $\ga=0.7$}
\label{tab:1}       % Give a unique label
% For LaTeX tables use
\begin{tabular}{llll}
\hline\noalign{\smallskip}
 & $\scale=0.1$ & $\scale=0.3$ & $\scale=0.9$  \\
\noalign{\smallskip}\hline\noalign{\smallskip}
$l_{\infty}$-error & 0.1085 & 0.08432 & 0.08431\\
$l_1$-error & 0.1571 & 0.1006 & 0.0988\\
\noalign{\smallskip}\hline
\end{tabular}
\end{table} 

\subsection{Influence of the numerical parameters $h$, $l_0$ and $r_0$}
We consider for the case $\epsilon=0.08$, $\gamma=0.7$ and $\scale=0.9$
three scenarios in which we analyze the influence of the numerical parameters $h$, $r_0$ and $l_0$ on
the accuracy of our algorithm. Therefore we keep hold two of these three numerical parameters in each scenario  
while the 
remaining one runs through some test values. The corresponding errors are presented in Tables \ref{table2},
\ref{table3} and \ref{table4}.

\begin{table}
% table caption is above the table
\caption{Influence of $h$  on the error for $\epsilon=0.08$, $\gamma=0.7$, 
$\scale=0.9$, $l_0=160$ and $r_0=160$}
\label{table2}       % Give a unique label
% For LaTeX tables use
\begin{tabular}{lll}
\hline\noalign{\smallskip}
& $l_{\infty}$-error & $l_1$-error \\ 
\noalign{\smallskip}\hline\noalign{\smallskip}
$h=0.16$ & 0.2639 & 0.1876\\
$h=0.08$ & 0.1180 & 0.0999\\
$h=0.04$ & 0.0947 & 0.1035\\
$h=0.02$ & 0.0868 & 0.1045\\
\noalign{\smallskip}\hline
\end{tabular}
\end{table} 

\begin{table}
% table caption is above the table
\caption{Influence of $r_0$  on the error for $\epsilon=0.08$, $\gamma=0.7$, 
$\scale=0.9$, $h=0.01$ and $l_0=160$}
\label{table3}       % Give a unique label
% For LaTeX tables use
\begin{tabular}{lll}
\hline\noalign{\smallskip}
& $l_{\infty}$-error & $l_1$-error \\ 
\noalign{\smallskip}\hline\noalign{\smallskip}
$r_0=10$ &  0.0902& 0.1075\\
$r_0=20$ &  0.0860& 0.1014\\
$r_0=40$ &  0.0848& 0.0998\\
$r_0=80$ &  0.0844& 0.0991\\
$r_0=160$ &  0.0843& 0.0988\\
\noalign{\smallskip}\hline
\end{tabular}
\end{table} 

\begin{table}
% table caption is above the table
\caption{Influence of $l_0$  on the error for $\epsilon=0.08$, $\gamma=0.7$, 
$\scale=0.9$, $h=0.01$ and $r_0=160$}
\label{table4}       % Give a unique label
% For LaTeX tables use
\begin{tabular}{lll}
\hline\noalign{\smallskip}
& $l_{\infty}$-error & $l_1$-error \\ 
\noalign{\smallskip}\hline\noalign{\smallskip}
$l_0=10$ &  0.4254&0.4333 \\
$l_0=20$ &  0.1885& 0.2463\\
$l_0=40$ &  0.1291& 0.1327\\
$l_0=80$ &  0.0922& 0.1114\\
$l_0=160$ & 0.0843 & 0.0988\\
\noalign{\smallskip}\hline
\end{tabular}
\end{table} 

\subsection{Convergence of the value functions}
In Tables \ref{table5} and \ref{table6}
we present the approximation errors in dependence of 
$\epsilon$ for $\ga=0.8$ and $\ga=0.9$ 
(we set $\scale=0.9$).

\begin{table}
% table caption is above the table
\caption{Convergence of the value functions $u^{\epsilon}$ for $\gamma=0.8$}
\label{table5}       % Give a unique label
% For LaTeX tables use
\begin{tabular}{llllll}
\hline\noalign{\smallskip}
& $l_{\infty}$-error & $l_1$-error&  h& $r_0$& $l_0$\\ 
\noalign{\smallskip}\hline\noalign{\smallskip}
$\epsilon=0.09$ & 0.0339 & 0.0261 & 0.01 & 100 & 360\\
$\epsilon=0.08$ & 0.0325 & 0.0253& 0.01& 100 &360\\
$\epsilon=0.05$ &0.0250  &0.0170 &0.01 & 100 &360\\
$\epsilon=0.04$ & 0.0205 & 0.0112& 0.01& 100&360\\
$\epsilon=0.02$& 0.0139&0.0130  &0.01 & 100 &360 \\
\noalign{\smallskip}\hline
\end{tabular}
\end{table}

\begin{table}
% table caption is above the table
\caption{Convergence of the value functions $u^{\epsilon}$ 
for $\gamma=0.9$}
\label{table6}       % Give a unique label
% For LaTeX tables use
\begin{tabular}{llllll}
\hline\noalign{\smallskip}
& $l_{\infty}$-error & $l_1$-error&  h& $r_0$& $l_0$\\ 
\noalign{\smallskip}\hline\noalign{\smallskip}
$\epsilon=0.08$ & 0.1121 &0.1234 & 0.01& 80 &300\\
$\epsilon=0.04$ &  0.1060& 0.1127&0.01 & 80 &300\\
$\epsilon=0.02$ &  0.0781& 0.0756& 0.01& 80 &300\\
\noalign{\smallskip}\hline
\end{tabular}
\end{table}

\subsection{Zero level sets of the value function}
Figure \ref{fig1} shows the zero level sets of $u^{\epsilon}$ for different times. The parameters
are chosen as follows: $\ga=0.9$, $\epsilon=0.04$, $h=0.02$, 
$\scale=0.9$, $r_0=100$ and $l_0=160$.
Figure \ref{fig2} shows the same but now the initial function is replaced by
\beq
\tilde u_0(x)=\max\left((x_1^2+1.7x_2^2)^{\frac{\ga+1}{2}}-1,0
\right)^2, \quad x = (x_1, x_2)
\eeq
so that the initial curve is an ellipse.
One can see that the evolving curves, i.e. the zero level sets 
of $u^{\epsilon}$, become circular and shrink to a point which is the known behavior for the 
level sets of the limit function $u$.

\begin{figure}
 \includegraphics[width=3.5cm]{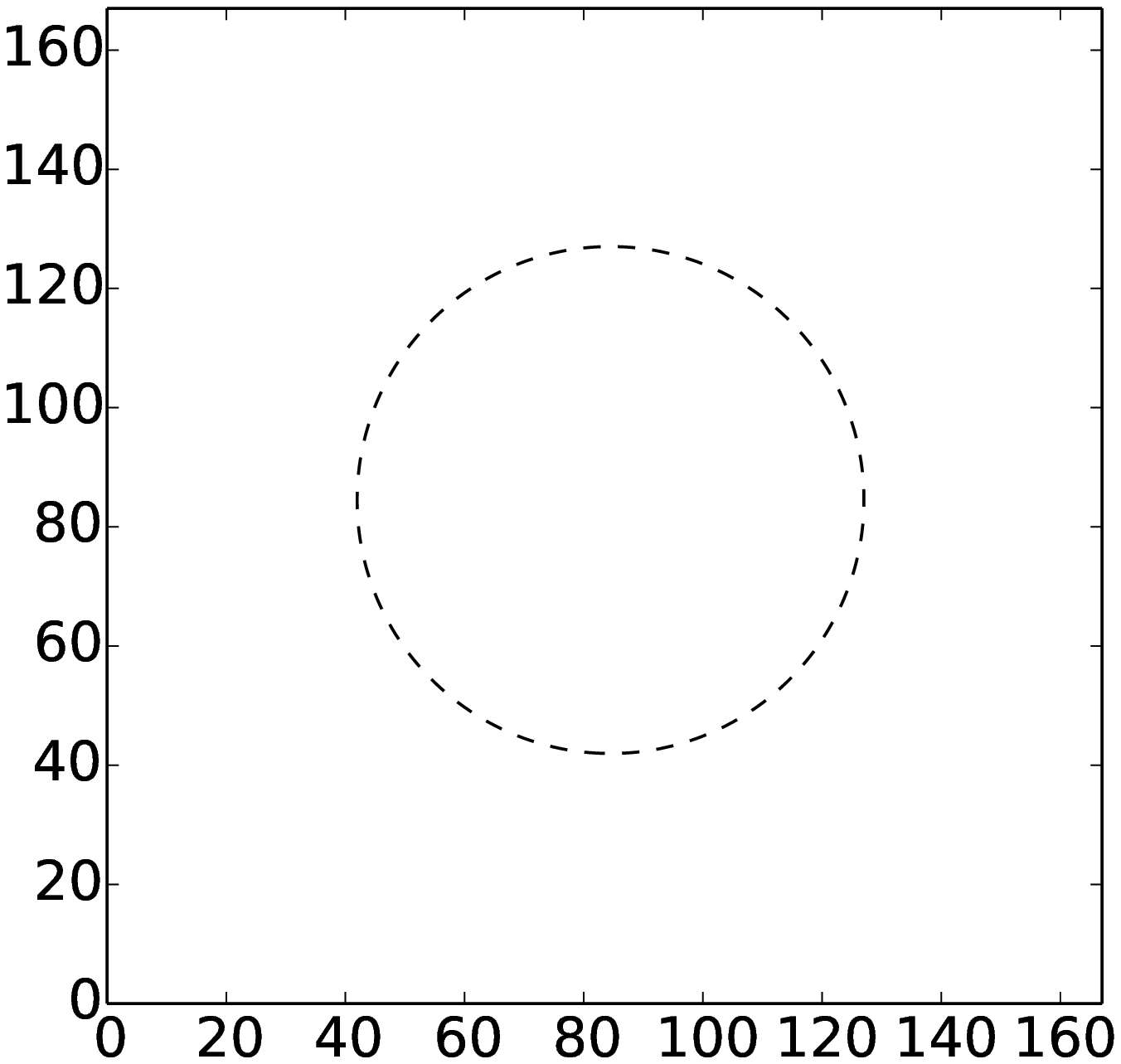} 
 \includegraphics[width=3.5cm]{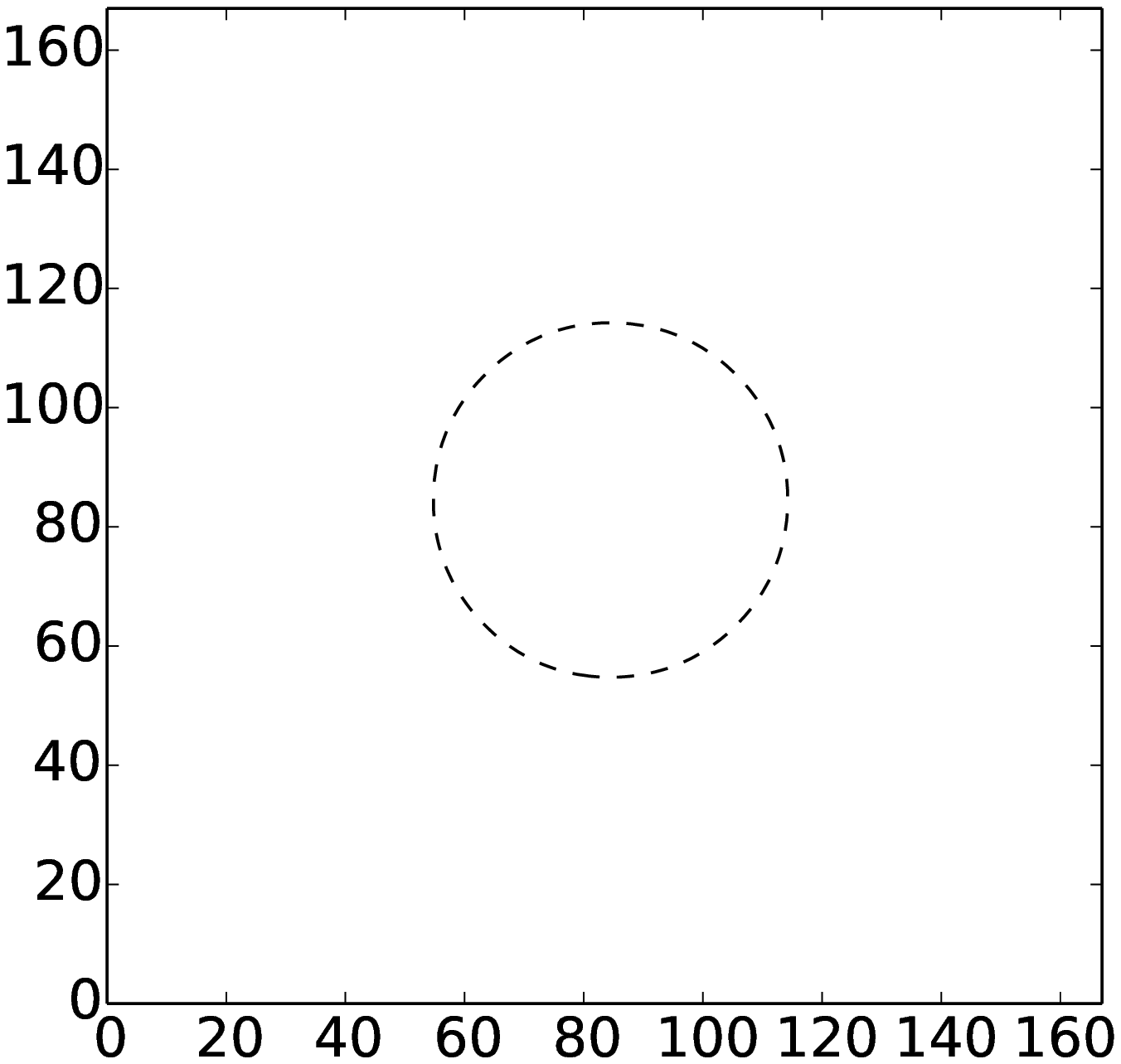}
 \includegraphics[width=3.5cm]{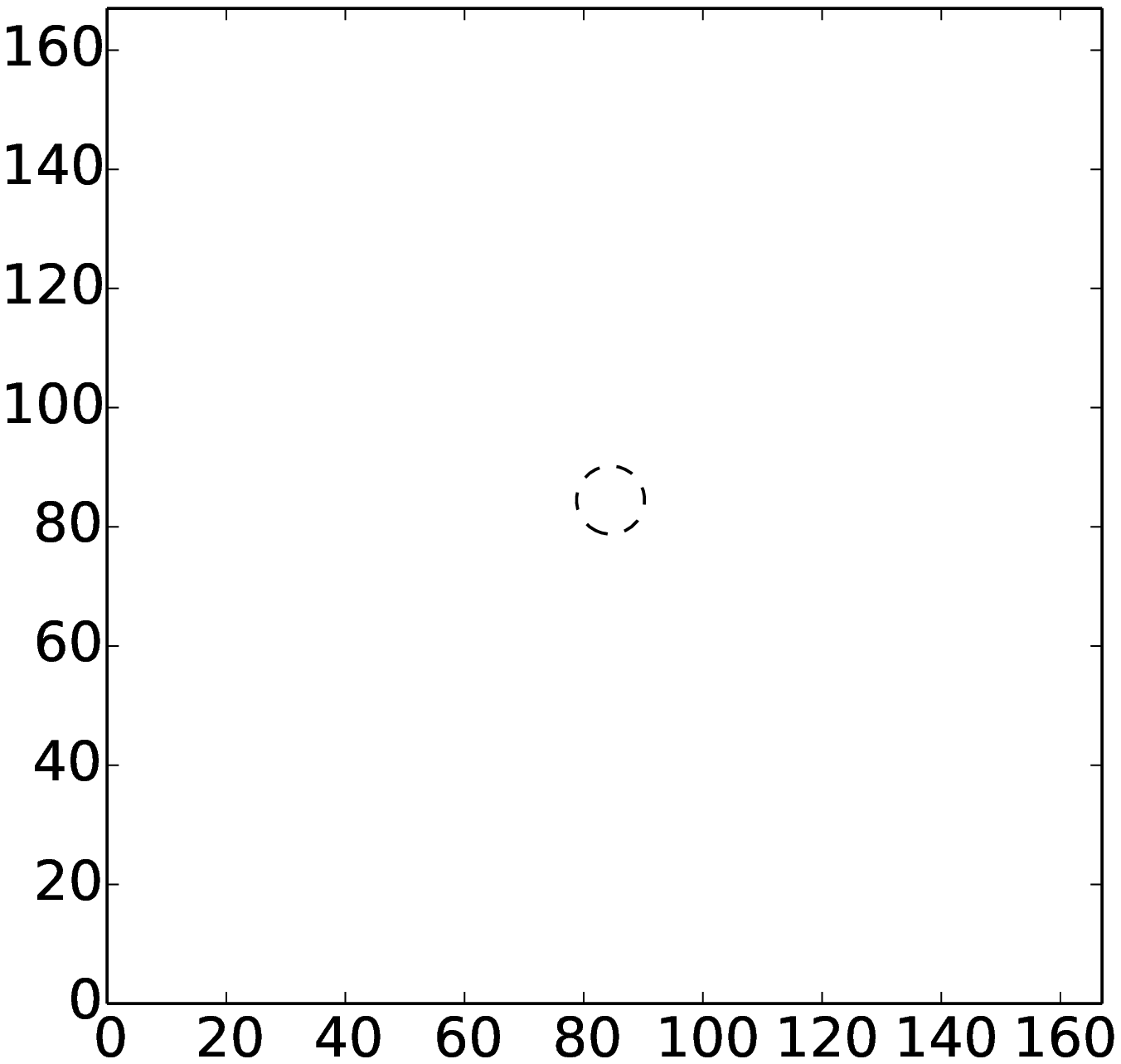}
 \caption{Circle-case: zero-level set (numerically -0.07-level) of $u^{\epsilon}$ with initial function $u_0$
 for $T=0$, $T=0.24$ and $T=0.48$.}
 \label{fig1}
 \end{figure}
 
\begin{figure}
 \includegraphics[width=3.5cm]{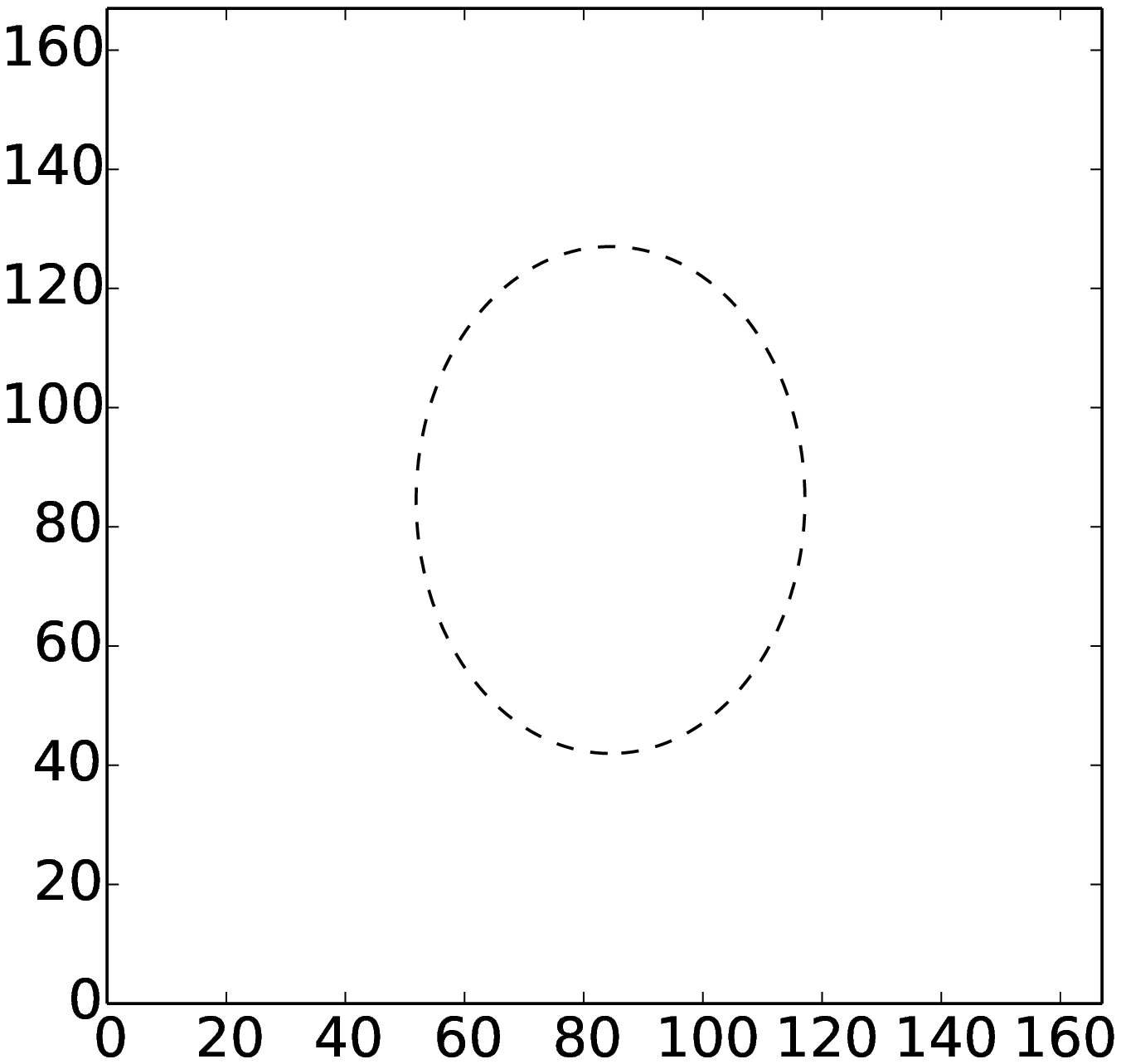} 
 \includegraphics[width=3.5cm]{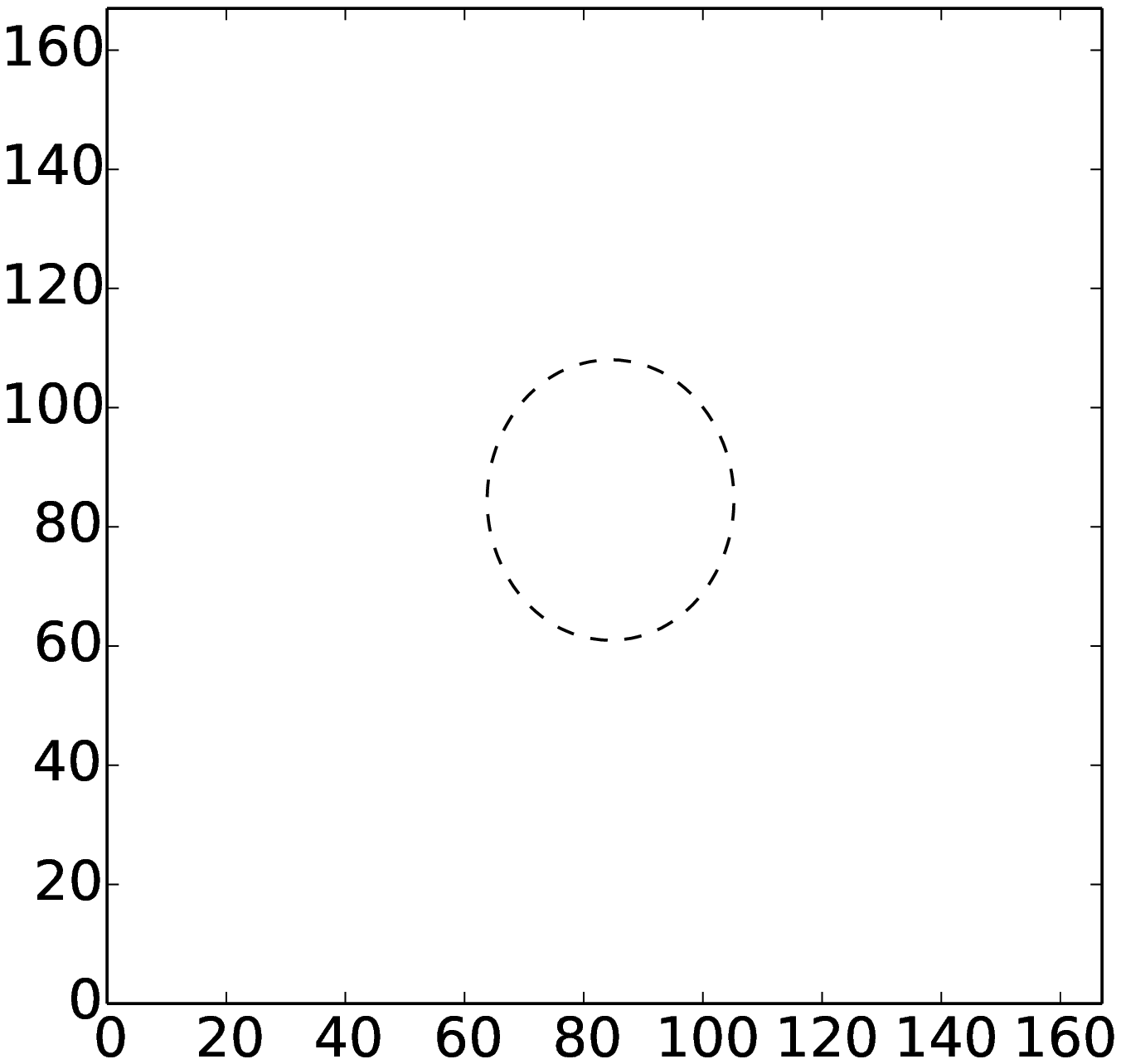}
 \includegraphics[width=3.5cm]{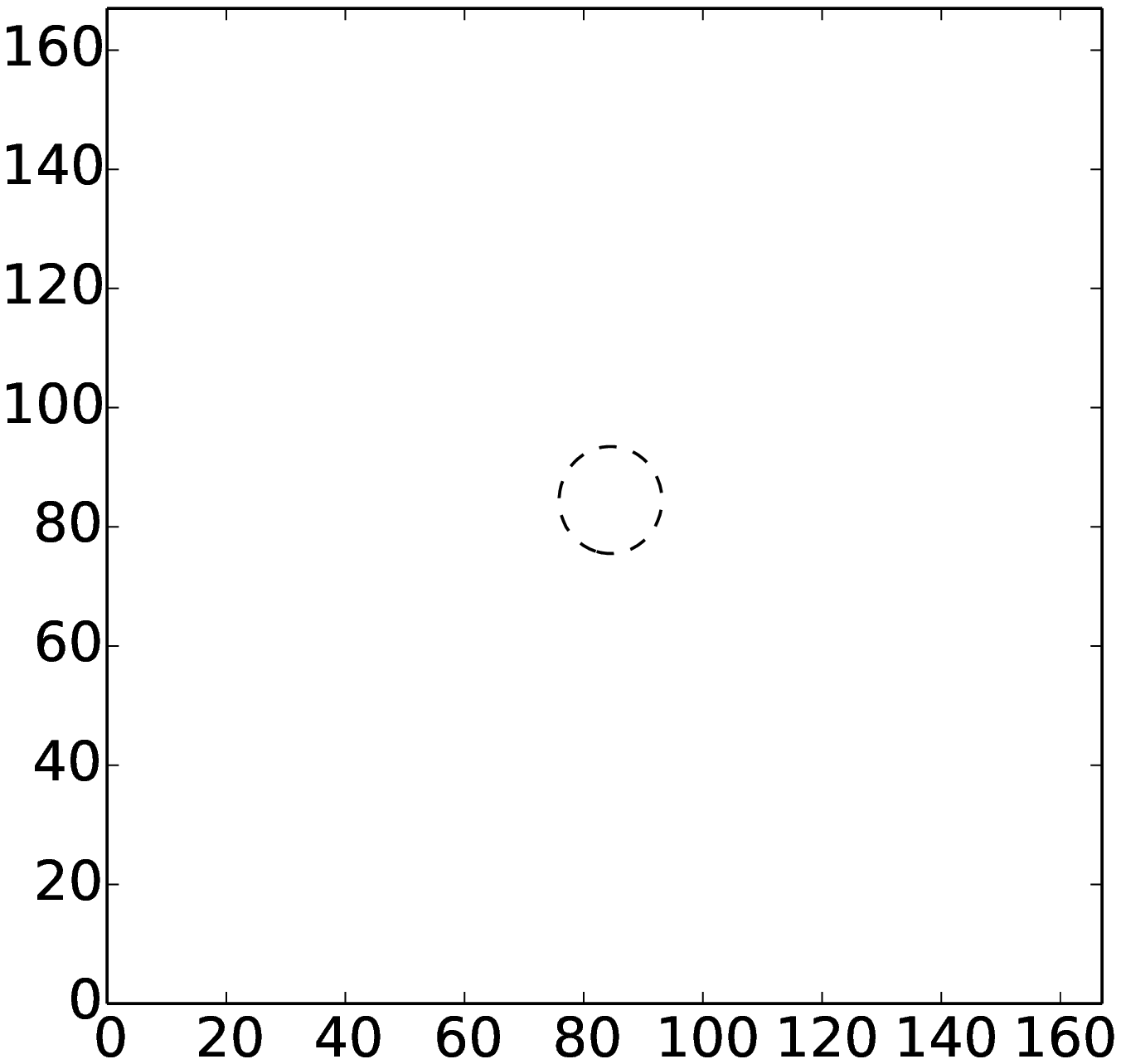}
 \caption{Ellipse-case: zero-level set (numerically -0.07-level) of $
    u^{\epsilon}$ with initial function $\tilde u_0$
 for $T=0$, $T=0.24$ and $T=0.36$.}
 \label{fig2}
 \end{figure}

\subsection{Discussion of the numerical results}

We recall that there are four numerical parameters, i.e. $h$, $\epsilon$, 
$r_0$ and $l_0$, in our approach. Incorporating the two space dimensions
we are confronted with a de facto five dimensional problem which we discretize.
Still, our best error is 0.0139 and in Table \ref{table5}, attained for the 
moderate parameter values $h=0.01$, $\Delta t = \epsilon^2=0.0004$, $r_0=100$
and $l_0=360$. We observed that the convergence properties improve when $\gamma$
is closer to $1$.

To classify our errors we compare with 
\cite{CarliniFalconeFerretti2010} where for the Kohn-Serfaty scheme of the 
curve shortening flow a $l_{\infty}$-error of $0.0078$ is obtained where 
$\Delta t = 0.02$ and $\Delta x = 0.0098$ , cf.
\cite[Table 2]{CarliniFalconeFerretti2010}. 
For the same flow the semi-Lagrangian scheme leads to a $l_{\infty}$-error
of $0.00832$, cf.
\cite[Table 2]{CarliniFalconeFerretti2010}. Note, that therein 
the initial function
is a higher order power of the difference $|x|^2-R_0^2$ than in our paper 
and this effects the error.
Furthermore, we refer to
\cite[Table 1]{CarliniFerretti2013}
where 
 a semi-Lagrangian approximation for our equation (\ref{81})
 in the case $\ga=\frac{1}{3}$ is considered.
For $\Delta x = 0.01$ and $\Delta t = 0.001$ the authors 
obtain there a $l_{\infty}$-error of $0.0542$ and of $0.0306$ for a modified 
scheme. 
Note, that the time interval for the calculation in each of the above cases 
is---as in our case---approximately 
$20\%$ of the time interval $[0, T^{*})$ where $T^{*}$ is the time
at which the evolving curve 'becomes a point'. Hence our Tables \ref{table5} and \ref{table6} contain values of
adequate size.

%We apply the above algorithm with following parameter values: 
%$\ga=0.7$, $h=0.02$, $\epsilon=0.04$,
%$\alpha_1=0.4 \in (\frac{3}{14}, \frac{3}{7}) \approx (0.22, 0.42)$,
%$\alpha_2=0.3\in (0, \frac{1}{3})$, $r_0=30$, $l_0=400$, $\tol=0.01$.
%The distance from $z$ to $x$ is at least $c\epsilon^{\frac{20}{17}}$ 
%and at most $c \epsilon^{0.2\bar 6}$ where $\epsilon^{\frac{20}{17}}
%\approx 0.023$ and $ \epsilon^{0.2\bar 6}\approx 0.42$. 

%\begin{acknowledgements}
%If you'd like to thank anyone, place your comments here
%and remove the percent signs.
%\end{acknowledgements}

% BibTeX users please use one of
%\bibliographystyle{spbasic}      % basic style, author-year citations
%\bibliographystyle{spmpsci}      % mathematics and physical sciences
%\bibliographystyle{spphys}       % APS-like style for physics
%\bibliography{}   % name your BibTeX data base

% Non-BibTeX users please use

\end{document}